\definecolor{skyblue}{rgb}{0.85,0.85,1}
\newtheorem{theorem}{Theorem}
\newtheorem*{hyp*}{Hypothesis}
\newtheorem{conj}{Conjecture}
\renewcommand{\labelenumi}{(\roman{enumi})}
\newcommand{\bbN}{\mathbb{N}}             
\newcommand{\bbR}{\mathbb{R}}             
\newcommand{\p}{\partial}				
\DeclareMathOperator{\Mor}{Mor}
\newcommand{\cE}{{\mathcal{E}}}
\newcommand{\cI}{{\mathcal{I}}}
\begin{document}

\title{Isoperimetric relations between Dirichlet and Neumann eigenvalues}
\author[G. Cox]{Graham Cox}\email{gcox@mun.ca}
\author[S. MacLachlan]{Scott MacLachlan}\email{smaclachlan@mun.ca}
\author[L. Steeves]{Luke Steeves}\email{luke.steeves@mun.ca}
\address{Department of Mathematics and Statistics, Memorial University of Newfoundland, St. John's, NL A1C 5S7, Canada}

\maketitle
\begin{abstract}
Inequalities between the Dirichlet and Neumann eigenvalues of the Laplacian have received much attention in the literature, but open problems abound. Here, we study the number of Neumann eigenvalues no greater than the first Dirichlet eigenvalue. Based on a combination of analytical and numerical results, we conjecture that this number is controlled by the isoperimetric ratio of the domain. This has applications to the nodal deficiency of eigenfunctions and is closely related to a long-standing conjecture of Yau on the Hausdorff measure of nodal sets.
\end{abstract}


\section{Introduction}
Let $\Omega \subset \bbR^n$ be a bounded domain with sufficiently smooth boundary. Denote the Dirichlet and Neumann eigenvalues of $-\Delta$ by
\[
	0 < \lambda_1 < \lambda_2 \leq \lambda_3 \leq \cdots
\]
and
\[
	0 = \mu_1 < \mu_2 \leq \mu_3 \leq \cdots
\]
respectively. In the one-dimensional case, the interlacing property
\[
	\mu_k < \lambda_k \leq \mu_{k+1}
\]
holds for every $k$. The extent to which these inequalities generalize to higher dimensions has received much attention over the years. However, most results impose rather strong assumptions (e.g. convexity) on the geometry on $\p\Omega$, and the optimal results for general domains remain unknown.

\subsection{Survey of known results}
The variational formulation of the eigenvalue problem immediately gives $\mu_k \leq \lambda_k$ for all $k$. It was shown by P\'olya that $\mu_2 < \lambda_1$ always holds \cite{P52}. In the case of a convex planar domain with $C^2$ boundary, Payne \cite{P55} showed that $\mu_{k+2} < \lambda_k$ for any $k$.

By imposing assumptions on various combinations of the principal curvatures of the boundary, Levine and Weinberger \cite{LW86} showed that $\mu_{k+r} < \lambda_k$ for a $C^{2,\alpha}$ domain, where $1 \leq r \leq n$ depends on the geometric assumptions. In particular, $\mu_{k+n} < \lambda_k$ when $\Omega$ is convex. Through a limiting argument, they obtained $\mu_{k+n} \leq \lambda_k$ for a general (not necessarily smooth) convex domain. They also obtained (as a special case) a result of Aviles \cite{A86}, that $\mu_{k+1} < \lambda_k$ when the mean curvature is nonnegative.

More generally, it was shown by Friedlander \cite{F91} that $\mu_{k+1} \leq \lambda_k$ on any domain with $C^1$ boundary. This result was extended to Lipschitz domains by Arendt and Mazzeo \cite{AM12} and even further by Filinov \cite{F04} to domains of finite Lebesgue measure for which the embedding $H^1(\Omega) \subset L^2(\Omega)$ is compact. In fact, the latter two references obtain the strict inequality $\mu_{k+1} < \lambda_k$.

\subsection{Open problems and conjectures}
The question of whether or not $\mu_{k+n} \leq \lambda_k$ holds for non-convex domains remains open.\footnote{A purported counterexample in \cite{LW86} is easily seen to be wrong, as has been pointed out in \cite{BLP09}.} More precisely, given a Dirichlet eigenvalue $\lambda_k$, one can ask how many Neumann eigenvalues, $\mu$, exist with $\mu \leq \lambda_k$, and how this number depends on the geometry of $\Omega$. We investigate this when $k=1$, where little is known beyond the general results described above.

The case $k=1$ is particularly important when studying nodal domains of eigenfunctions. This is because any eigenvalue $\lambda$ is necessarily the first Dirichlet eigenvalue on any nodal domain of its associated eigenfunction. Several applications of this idea are described in Section \ref{sec:apply}.

Given a domain $\Omega$, we define the number
\begin{align}\label{eq:Ndef}
	N(\Omega) = \#\{k \in \bbN : \mu_k \leq \lambda_1\}.
\end{align}
It follows from the results surveyed above that $N(\Omega) \geq 2$ for any domain, and $N(\Omega) \geq n+1$ when $\Omega$ is convex. 

To obtain further intuition, we use the finite-element method (FEM) to approximate $N(\Omega)$ for a wide variety of planar domains. In every case, our finite-element approximations satisfy $N(\Omega) \geq 3$, whether or not $\Omega$ is convex. In fact, the farther $\Omega$ is from a ball, the larger $N(\Omega)$ is seen to be, bringing to mind the remark of Osserman \cite{O78} that ``One has the somewhat ironic situation that the more irregular the boundary, the stronger will be the isoperimetric inequality, but the harder it is to prove."

We quantify this irregularity using the isoperimetric ratio
\begin{align}
	\cI(\Omega) = \frac{|\p\Omega|^2}{|\Omega|},
\end{align}
where $|\Omega|$ denotes the area of the domain and $|\p\Omega|$ the perimeter. Computing $N(\Omega)$ and $\cI(\Omega)$ numerically for a wide variety of examples, we arrive at the following conjecture.

\begin{figure}
	\includegraphics[scale=0.5]{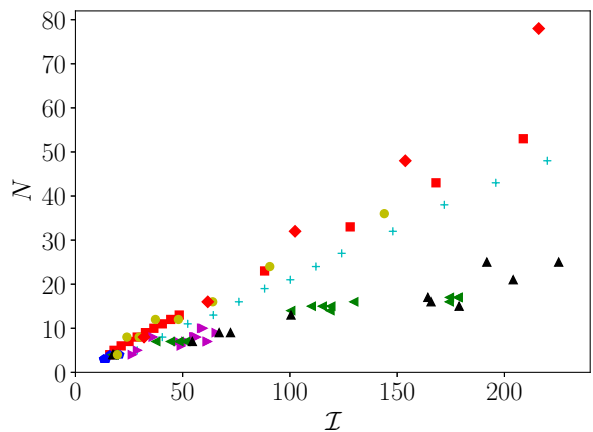}
\caption{$N$ vs. $\cI$ for planar domains of varying geometry and topology}
\label{fig:combined}
\end{figure}

\begin{conj}\label{con:2D}
There exist constants $c_1,c_2 > 0$ so that
\[
	c_1 \cI(\Omega) \leq N(\Omega) \leq c_2 \cI(\Omega)
\]
for any bounded Lipschitz domain $\Omega \subset \bbR^2$.
\end{conj}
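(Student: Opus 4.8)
The plan is to rewrite $N(\Omega) = \mathcal{N}(\lambda_1)$, where $\mathcal{N}(\lambda) = \#\{k : \mu_k \le \lambda\}$ is the Neumann counting function, and to reduce the conjecture to two ingredients: a two-term Weyl-type estimate
\[
  c\bigl(|\Omega|\,\lambda + |\p\Omega|\sqrt{\lambda}\bigr) \;\le\; \mathcal{N}(\lambda) \;\le\; C\bigl(|\Omega|\,\lambda + |\p\Omega|\sqrt{\lambda} + 1\bigr)
\]
with constants depending only on $|\Omega|$ and $|\p\Omega|$, and the comparison $\lambda_1(\Omega) \asymp |\p\Omega|^2/|\Omega|^2$. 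Granting both, one has $|\Omega|\lambda_1 \asymp |\p\Omega|^2/|\Omega|$ and $|\p\Omega|\sqrt{\lambda_1} \asymp |\p\Omega|^2/|\Omega|$, so $N(\Omega) \asymp |\p\Omega|^2/|\Omega| = \cI(\Omega)$, the stray additive constant being absorbed via the classical isoperimetric inequality $\cI(\Omega) \ge 4\pi$. So the task splits into an upper bound on $\mathcal{N}$, a lower bound on $\mathcal{N}$ (really a min--max construction), and the two-sided bound on $\lambda_1$.

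For the \emph{upper} half of the conjecture, $\lambda_1(\Omega) \le C|\p\Omega|^2/|\Omega|^2$ follows from the trial function $u = \min\{d(\cdot,\p\Omega)/t,\,1\}$: choosing $t$ with $|\{d(\cdot,\p\Omega)\ge t\}| = \tfrac12|\Omega|$ gives $\lambda_1(\Omega) \le t^{-2}$, and a collar-volume estimate $|\{d(\cdot,\p\Omega)<s\}| \lesssim s\,|\p\Omega|$ forces $t \gtrsim |\Omega|/|\p\Omega|$. Feeding this into a Netrusov--Safarov-type upper bound for $\mathcal{N}$ yields $N(\Omega) \le C\,\cI(\Omega)$. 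The technical point here is to obtain the collar estimate and the counting bound for a \emph{general} bounded Lipschitz domain with constants depending only on $|\Omega|$ and $|\p\Omega|$, not on a Lipschitz modulus; I would prove the counting bound by Dirichlet--Neumann bracketing over a dyadic decomposition of a boundary collar, controlling each cell by a Poincar\'e inequality.

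For the \emph{lower} half, the strategy is to exhibit an $m$-dimensional subspace of $H^1(\Omega)$, with $m \asymp |\p\Omega|\sqrt{\lambda_1}$, on which the Rayleigh quotient stays below $\lambda_1$, and then apply min--max. Fix $\delta \asymp \lambda_1^{-1/2}$, tile a boundary collar of width $\asymp\delta$ into $m \asymp |\p\Omega|/\delta$ cells, and on each cell place a bump supported in that cell and varying on scale $\delta$; the bumps have pairwise disjoint supports, so the Rayleigh quotient of any combination is at most the maximum of the individual ones, which is $\lesssim \delta^{-2} \asymp \lambda_1$. Tuning the implicit constant in $\delta$ makes this strictly less than $\lambda_1$, giving $N(\Omega) \gtrsim |\p\Omega|\sqrt{\lambda_1}$.

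The step I expect to be the genuine obstacle is the remaining half of the $\lambda_1$ comparison: the \emph{lower} bound $\lambda_1(\Omega) \ge c|\p\Omega|^2/|\Omega|^2$, exactly the reverse of the inequality used above. This cannot hold for every bounded Lipschitz domain — a fixed disk with a thin outgrowth of length $L$ has $\lambda_1 = \Theta(1)$ while $|\p\Omega|^2/|\Omega|^2 = \Theta(L^2)$, and for such a domain $N(\Omega) = \Theta(L)$ whereas $\cI(\Omega) = \Theta(L^2)$, so the lower half of the conjecture \emph{fails} in this regime. I therefore expect that any proof must either restrict to domains whose Lipschitz character is uniform at a fixed length scale (ruling out such degenerating outgrowths and thin necks), or replace the uniform collar by a construction adapted to the local width: a channel of width $w \lesssim \lambda_1^{-1/2}$ and length $\ell$ contributes $\asymp \ell\sqrt{\lambda_1}$ Neumann modes below $\lambda_1$ from its longitudinal modes, after which one needs, crucially using the uniformity hypothesis, a comparison of the total channel length with $\cI(\Omega)$. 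Pinning down the hypotheses under which $\lambda_1 \gtrsim \cI(\Omega)/|\Omega|$ holds, and proving that inequality, is where the real difficulty lies; the rest is largely a packaging of known techniques.
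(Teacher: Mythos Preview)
This statement is a conjecture in the paper, not a theorem; no proof is offered. The paper verifies it analytically only for rectangles (Theorem~1) and otherwise presents finite-element evidence for several families of planar domains. There is therefore no proof in the paper against which to compare your proposal.

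Your outline is nonetheless coherent, and your identification of the obstruction is sharp. Indeed, your tadpole example appears to do more than obstruct your particular approach: it seems to refute the lower bound of the conjecture as stated. Take the unit disk with a rectangular outgrowth of length $L$ and width $\epsilon = 1/L$. Then $|\Omega| = \pi + 1$ and $|\p\Omega| = 2\pi + 2L$, so $\cI(\Omega) = \Theta(L^2)$. Domain monotonicity and Faber--Krahn give $\lambda_1(\Omega) = \Theta(1)$, and Dirichlet--Neumann bracketing across the neck yields $N(\Omega) = \Theta(L)$: the strip contributes $\Theta(L\sqrt{\lambda_1}) = \Theta(L)$ longitudinal Neumann modes below $\lambda_1$, while its transverse modes sit at height $(\pi/\epsilon)^2 = \pi^2 L^2 \gg \lambda_1$. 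Hence $N(\Omega)/\cI(\Omega) = \Theta(1/L) \to 0$, and no universal $c_1 > 0$ can work. Each such domain is individually Lipschitz; only the Lipschitz radius degenerates, which the conjecture as written does not constrain. The families tested numerically in the paper (rectangles, combs, regular and random polygons, square annuli, waffles) all keep their smallest feature width bounded below at a fixed scale and so never probe this regime. Your instinct that some uniform control on the Lipschitz character, or an equivalent hypothesis ruling out collapsing tails and necks, is required for the lower inequality looks correct.
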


The computations leading to this conjecture are described in Sections \ref{sec:analytical} and \ref{sec:numerical} and summarized in Figure \ref{fig:combined}.

While our numerical investigations focus on $n=2$, we conjecture that the same result also holds in higher dimensions. For $\Omega \subset \bbR^n$ we define the isoperimetric ratio
\begin{align}
	\cI(\Omega) = \frac{|\p\Omega|^n}{|\Omega|^{n-1}},
\end{align}
where $|\Omega|$ is the Lebesgue measure of $\Omega$, and $|\p\Omega|$ is the $(n-1)$-dimensional Hausdorff measure of the boundary.

\begin{conj}\label{con:general}
There exist constants $c_1,c_2 > 0$, depending only on $n$, so that
\[
	c_1 \cI(\Omega) \leq N(\Omega) \leq c_2 \cI(\Omega)
\]
for any Lipschitz domain $\Omega \subset \bbR^n$.
\end{conj}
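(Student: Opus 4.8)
\emph{Overview.} The two inequalities are essentially unrelated, and the plan is to attack them separately, using as a common bridge the Neumann counting function
\begin{align}\label{eq:Ncounting}
	\mathcal{N}_\Omega(\lambda) := \#\{k \in \bbN : \mu_k \leq \lambda\},
\end{align}
so that $N(\Omega) = \mathcal{N}_\Omega\big(\lambda_1(\Omega)\big)$. Both directions then reduce to sandwiching $\mathcal{N}_\Omega(\lambda_1)$ between constant multiples of the Weyl quantities $|\Omega|\,\lambda_1^{n/2}$ and $|\partial\Omega|\,\lambda_1^{(n-1)/2}$, each of which equals $\cI(\Omega)$ up to constants as soon as $\lambda_1(\Omega) \asymp \big(|\partial\Omega|/|\Omega|\big)^2$. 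The upper half of this equivalence, $\lambda_1(\Omega) \lesssim \big(|\partial\Omega|/|\Omega|\big)^2$, is equivalent --- via $\lambda_1(\Omega) \leq \lambda_1(B_{\rho(\Omega)})$ for an inscribed ball of radius $\rho(\Omega)$ --- to the inradius bound $\rho(\Omega) \gtrsim |\Omega|/|\partial\Omega|$, and looks accessible; the lower half, $\lambda_1(\Omega) \gtrsim \big(|\partial\Omega|/|\Omega|\big)^2$, will be the sticking point.

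\emph{The upper bound.} For $N(\Omega) \leq c_2\,\cI(\Omega)$ I would prove the non-asymptotic estimate
\begin{align}\label{eq:weyl-up}
	\mathcal{N}_\Omega(\lambda) \ \lesssim\ |\Omega|\,\lambda^{n/2} + |\partial\Omega|\,\lambda^{(n-1)/2} \qquad (\lambda > 0)
\end{align}
by Neumann bracketing against a grid of cubes of side $h \asymp \lambda^{-1/2}$: a cube contained in $\Omega$ then supports only the constant mode below $\lambda$ and there are $O(|\Omega|\,\lambda^{n/2})$ of these, while a cube meeting $\partial\Omega$ supports $O(1)$ modes once $\partial\Omega$ is a Lipschitz graph at scale $h$, and there are $O(|\partial\Omega|\,\lambda^{(n-1)/2})$ of those. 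Taking $\lambda = \lambda_1(\Omega)$ in \eqref{eq:weyl-up} and inserting $\lambda_1(\Omega) \lesssim \big(|\partial\Omega|/|\Omega|\big)^2$ finishes this direction. The two care points are: the boundary-cube count naively involves the Lipschitz character of $\partial\Omega$ and would have to be made to depend on $n$ alone; and the inradius bound $\rho(\Omega) \gtrsim |\Omega|/|\partial\Omega|$, which follows from $\Omega \subseteq \{\operatorname{dist}(\cdot,\partial\Omega) < \rho(\Omega)\}$ together with the $(n-1)$-Ahlfors regularity of a Lipschitz boundary, likewise carries an a priori Lipschitz-dependent constant.

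\emph{The lower bound.} The natural plan for $N(\Omega) \geq c_1\,\cI(\Omega)$ is to exhibit $M \asymp \cI(\Omega)$ test functions with pairwise disjoint supports, each of Rayleigh quotient $\leq \lambda_1(\Omega)$, and conclude $\mu_{M+1}(\Omega) < \lambda_1(\Omega)$ from the min--max principle (the projection off the constant eigenfunction is harmless, the supports being small and disjoint). Concretely: cover $\partial\Omega$ by $M \asymp |\partial\Omega| / \ell^{n-1}$ patches of diameter $\asymp \ell$, fix a collar of width $\asymp \ell$, and on each patch put a bump rising from $0$ on the inner face of the collar to $1$ on $\partial\Omega$; an optimal one-dimensional profile gives Rayleigh quotient $\asymp \ell^{-2}$, so the choice $\ell \asymp \lambda_1^{-1/2}$ makes this $\lesssim \lambda_1$ while $M \asymp |\partial\Omega|\,\lambda_1^{(n-1)/2}$, which is $\asymp \cI(\Omega)$ precisely when $\lambda_1(\Omega) \gtrsim \big(|\partial\Omega|/|\Omega|\big)^2$. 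For thin tubes and thin spherical shells this goes through essentially verbatim, and indeed $N(\Omega) \asymp \cI(\Omega)$ in those cases.

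\emph{The main obstacle.} All of the difficulty sits in the inequality $\lambda_1(\Omega) \gtrsim \big(|\partial\Omega|/|\Omega|\big)^2$ --- equivalently, in fitting a collar of width $\asymp \lambda_1^{-1/2}$ around \emph{all} of $\partial\Omega$. It fails when $\Omega$ has a thick core, which pins $\lambda_1(\Omega)$ to a bounded value, together with a long, thin, highly convoluted piece of boundary, along which any collar is narrow and so has transverse Rayleigh quotient far exceeding $\lambda_1$. The sharpest witness is an $M$-fingered comb --- a fixed unit square (or cube, or its $\bbR^n$ prism analogue) carrying $M$ thin parallel fingers of fixed length: as the finger width tends to $0$ one has $\lambda_1(\Omega) \to \lambda_1(\mathrm{core})$, Neumann bracketing gives $N(\Omega) = O(M)$, and the bumps above give $N(\Omega) \geq M + 1$, so $N(\Omega) \asymp M$, whereas $\cI(\Omega) \asymp M^2$. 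Thus $N(\Omega)/\cI(\Omega) \to 0$ along this family, so the lower bound of Conjecture~\ref{con:general} seems to need an additional hypothesis. I would expect it to hold after restricting to a class on which $\lambda_1(\Omega) \asymp \big(|\partial\Omega|/|\Omega|\big)^2$ --- for instance thin tubular neighbourhoods of lower-dimensional skeletons, where the whole scheme works --- or after replacing $\cI(\Omega)$ on the left by a smaller geometric quantity that does not charge for the cheap, $\lambda_1$-irrelevant part of $\partial\Omega$; isolating that quantity and proving the matching lower bound for it is, to my mind, the real problem.
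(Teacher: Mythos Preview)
This statement is a conjecture, not a theorem: the paper does not prove it, but offers it as an open problem supported by numerical evidence and the rectangle case (Theorem~\ref{thm:rectangle}). There is therefore no proof in the paper against which to compare your attempt.

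Your outline for the upper bound $N(\Omega) \leq c_2\,\cI(\Omega)$ is reasonable, and you have correctly flagged the main difficulty: the constants in both the two-term Neumann counting estimate and the inradius bound $\rho(\Omega) \gtrsim |\Omega|/|\partial\Omega|$ naively depend on the Lipschitz character of $\partial\Omega$, and removing that dependence is genuine work.

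More importantly, your proposed counterexample to the lower bound appears to be correct, and it probes a regime the paper's numerics do not reach. In the paper's comb domains $C_m$, the teeth have fixed unit width and the base grows with $m$, so $|\partial C_m|/|C_m|$ stays bounded and $\cI(C_m) \asymp m$; the numerics then show $N(C_m) \asymp m$ as well, consistently with the conjecture. Your family is genuinely different: the core is a fixed unit square and the $M$ fingers have width $w \asymp 1/M \to 0$, so $\lambda_1(\Omega)$ is pinned below $2\pi^2$ by domain monotonicity while $|\partial\Omega| \asymp M$ and $|\Omega|$ remains bounded, giving $\cI(\Omega) \asymp M^2$. Neumann bracketing across the finger bases yields $N(\Omega) \leq C + 2M$, and $M$ disjointly supported finger bumps of Rayleigh quotient $\pi^2/4 < \lambda_1(\Omega)$ give $N(\Omega) \geq M$, so $N(\Omega) \asymp M$ and $N(\Omega)/\cI(\Omega) \to 0$. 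Since every domain in the family is polygonal, hence Lipschitz, this contradicts the lower bound of Conjecture~\ref{con:general} as stated. Your closing suggestion --- that the correct lower bound should replace $\cI(\Omega)$ by a geometric quantity that does not charge for boundary at scales finer than $\lambda_1^{-1/2}$ --- seems to me exactly right.
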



In Theorem \ref{thm:rectangle}, we verify this conjecture for $n$-dimensional rectangles. For the unit ball, $B^n \subset \bbR^n$, we prove in Theorem \ref{thm:ball} that $N(B^n)$ grows faster than any polynomial function of $n$. The isoperimetric ratio satisfies
\[
	\cI(B^n) = \frac{(n\omega_n)^{n}}{\omega_n^{n-1}} = n^n \omega_n,
\]
where $\omega_n = \pi^{n/2} / \Gamma(n/2+1)$ is the volume of the unit ball in $\bbR^n$, and Stirling's approximation for the Gamma function implies $\omega_n \approx C n^{-(n+1)/2}$ for large $n$.  Therefore, $\cI(B^n) \approx C n^{(n-1)/2}$ also has superpolynomial growth in $n$.

To the best of our knowledge, none of the known isoperimetric inequalities for Dirichlet and Neuman eigenvalues are of use in resolving Conjecture \ref{con:general}. Instead, we suggest that more detailed information be sought in the spectrum of the Dirichlet-to-Neumann map, building on the work of Friedlander \cite{F91}. As described in Sections \ref{sec:nodal} and \ref{sec:size}, such information would also improve our understanding of Courant's nodal domain theorem and Yau's conjecture of the size of the nodal set.

\subsection*{Acknowledgments}
The authors would like to thank Ram Band for helpful discussions and comments on this work. G.C. acknowledges the support of NSERC grant RGPIN-2017-04259.  The work of S.M. was partially supported by NSERC discovery grants RGPIN-2014-06032 and RGPIN-2019-05692. L.S. was supported by an NSERC Undergraduate Summer Research Award and the aforementioned NSERC grants.

\section{Motivation and implications}\label{sec:apply}
In this section, we explain our motivation for considering the quantity $N(\Omega)$ in the first place, and describe the consequences Conjecture \ref{con:general} would have if true.

\subsection{Relation to the nodal deficiency}\label{sec:nodal}
Our investigation into the quantity $N(\Omega)$ was inspired by a recent result in \cite{CJM17} (see also \cite{BCM19}) on the nodal deficiency of Laplacian eigenfunctions. Combining this with Friedlander's lemma for the Dirichlet-to-Neumann map, we obtain an upper bound on the spectral position of an eigenvalue.

To state the result in its simplest form, let $\phi$ be a Dirichlet eigenfunction of the Laplacian on $\Omega$, corresponding to a simple eigenvalue $\lambda$, and define the regions $\Omega_+ = \{\phi(x) > 0\}$ and $\Omega_- = \{\phi(x) < 0\}$. Since $\lambda$ is simple, it equals $\lambda_k$ for a unique $k \in \bbN$, which we call the spectral position of $\lambda$.

\begin{theorem}\label{thm:position}
The spectral position of $\lambda$ satisfies
\begin{align}\label{kbound}
	k \leq N(\Omega_+) + N(\Omega_-),
\end{align}
where the indices on the right-hand side count eigenvalues with either Dirichlet or Neumann boundary conditions on the interior boundaries $\p\Omega_\pm \cap \Omega$ and Dirichlet boundary conditions on $\p\Omega$.
\end{theorem}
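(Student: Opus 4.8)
The plan is to compare the eigenvalue $\lambda = \lambda_k$ on $\Omega$ with eigenvalues of two auxiliary problems, one on $\Omega_+$ and one on $\Omega_-$, and to use a variational (min-max) argument to produce enough test functions to force $\lambda$ down to spectral position at most $N(\Omega_+) + N(\Omega_-)$. The key observation is that $\phi|_{\Omega_\pm}$ is, by the nodal domain property, the first Dirichlet eigenfunction on each of $\Omega_+$ and $\Omega_-$, with the same eigenvalue $\lambda$: here the boundary decomposes as $\partial\Omega_\pm = (\partial\Omega_\pm \cap \Omega) \cup (\partial\Omega_\pm \cap \partial\Omega)$, and $\phi$ vanishes on the interior part by definition and on the outer part because $\phi$ satisfies the Dirichlet condition on $\partial\Omega$. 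So $\lambda$ is the \emph{first} Dirichlet eigenvalue on each $\Omega_\pm$ with Dirichlet conditions on the whole of $\partial\Omega_\pm$.

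Next I would bring in the quantities $N(\Omega_\pm)$ as defined through \eqref{eq:Ndef}, but with the mixed boundary conditions indicated in the statement: on $\Omega_\pm$ one counts eigenvalues $\mu$ of the Laplacian with Neumann conditions on the interior boundary $\partial\Omega_\pm \cap \Omega$ and Dirichlet conditions on $\partial\Omega \cap \partial\Omega_\pm$ satisfying $\mu \le \lambda$ (the relevant first Dirichlet eigenvalue being exactly $\lambda$). Call these mixed eigenvalues $\mu_1^\pm \le \mu_2^\pm \le \cdots$; by definition there are $N(\Omega_\pm)$ of them that are $\le \lambda$. The strategy is to assemble, out of the corresponding eigenfunctions on $\Omega_+$ and on $\Omega_-$, a family of $N(\Omega_+) + N(\Omega_-)$ functions on $\Omega$, extended by zero across the nodal set, that are linearly independent in $H^1_0(\Omega)$ and span a subspace on which the Rayleigh quotient is $\le \lambda$. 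Extension by zero is legitimate in $H^1$ precisely because these functions are \emph{Neumann} data on the interior interface $\partial\Omega_\pm \cap \Omega$, so no jump in the function (only possibly in the normal derivative) is created when we glue; the resulting functions lie in $H^1_0(\Omega)$ since they vanish on $\partial\Omega$. The min-max principle then yields $\lambda_{N(\Omega_+)+N(\Omega_-)}(\Omega) \le \lambda$, hence $k \le N(\Omega_+) + N(\Omega_-)$.

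A subtlety to handle carefully: gluing Neumann eigenfunctions from the two sides produces a function in $H^1_0(\Omega)$ whose Rayleigh quotient equals a weighted average of the two one-sided Rayleigh quotients (each $\le \lambda$), so the quotient is $\le \lambda$, but for the strict count we should make sure the subspace has the full dimension $N(\Omega_+)+N(\Omega_-)$ — functions supported on $\Omega_+$ are automatically independent from those supported on $\Omega_-$ since the supports meet only in the measure-zero nodal set, and within each side the eigenfunctions are independent. One must also address the borderline contribution: the first mixed eigenvalue on each side is $\le \lambda$ and its eigenfunction, extended by zero, is an admissible test function, so each side genuinely contributes its full $N(\Omega_\pm)$. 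The main obstacle is the regularity/gluing step — verifying that the zero-extension of a mixed Dirichlet--Neumann eigenfunction across the interior nodal boundary really lands in $H^1(\Omega)$ (equivalently, that the interface has enough regularity and the eigenfunctions have $H^1$ traces that match, namely zero on the outer boundary and arbitrary but continuous across the interface) — and then confirming linear independence in $H^1_0(\Omega)$; once that is in place, the min-max inequality is immediate.
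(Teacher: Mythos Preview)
Your argument has a genuine gap at the gluing step, and in fact the variational inequality runs the wrong way. A mixed eigenfunction on $\Omega_+$ satisfies the \emph{Neumann} condition on the interior interface $\partial\Omega_+\cap\Omega$, so its trace there is generically nonzero; extending by zero across the nodal set therefore creates a jump in the function itself, not merely in the normal derivative, and the extension does \emph{not} lie in $H^1_0(\Omega)$. (It is Dirichlet data, not Neumann data, that permits zero-extension in $H^1$ --- you have this reversed.) Even setting that aside, suppose you did manage to produce an $m$-dimensional subspace of $H^1_0(\Omega)$, with $m=N(\Omega_+)+N(\Omega_-)$, on which the Rayleigh quotient is $\le\lambda$. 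Min--max then gives $\lambda_m(\Omega)\le\lambda=\lambda_k$, and since $\lambda_k$ is simple this forces $m\le k$, the \emph{opposite} of what you want. A quick one-dimensional check (e.g.\ $\Omega=(0,\pi)$, $k=3$) shows $N(\Omega_+)+N(\Omega_-)$ can strictly exceed $k$, confirming that your construction cannot yield such a subspace.

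The variational argument you are reaching for goes the other direction: restriction gives an inclusion of form domains $H^1_0(\Omega)\hookrightarrow \{u\in H^1(\Omega_+)\oplus H^1(\Omega_-): u|_{\partial\Omega}=0\}$ under which the Dirichlet form is preserved, so by domain monotonicity (Neumann bracketing) the $j$th Dirichlet eigenvalue on $\Omega$ dominates the $j$th eigenvalue of the decoupled mixed problem, and counting eigenvalues $\le\lambda$ gives $k\le N(\Omega_+)+N(\Omega_-)$ directly. The paper, however, does not argue this way either: it invokes the nodal deficiency formula $k-\nu=\Mor(\Lambda_++\Lambda_-)$ from \cite{CJM17}, bounds the Morse index of the sum by $\Mor(\Lambda_+)+\Mor(\Lambda_-)$, and then appeals to Friedlander's lemma \cite{F91} relating $\Mor(\Lambda_\pm)$ to $N(\Omega_\pm)-\nu_\pm$.
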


This formula remains valid if $\Omega$ is replaced by a compact Riemannian manifold, and a modified version holds for degenerate eigenvalues; see \cite{BCM19} for details.

\begin{proof}
Let $\nu_\pm$ denote the number of connected components of $\Omega_\pm$, so the total number of nodal domains is $\nu = \nu_+ + \nu_-$. On $\Omega_\pm$, the first Dirichlet eigenvalue is $\lambda$, with multiplicity $\nu_\pm$. Thus, for sufficiently small $\epsilon > 0$, $0$ is not an eigenvalue of the operator $\Delta + (\lambda+\epsilon)$ on $\Omega_\pm$, so the Dirichlet-to-Neumann maps $\Lambda_\pm$ are well defined (see \cite{CJM17} for details).

The nodal deficiency of $\phi$ is defined to be $\delta(\phi) = k - \nu$. From \cite[Corollary 2.5]{CJM17}, we have $\delta(\phi) = \Mor(\Lambda_+ + \Lambda_-) \leq \Mor(\Lambda_+) + \Mor(\Lambda_-)$, where $\Mor$ denotes the Morse index, i.e. the number of negative eigenvalues. Moreover, \cite[Lemma 1.2]{F91} implies  $\Mor(\Lambda_\pm) = N(\Omega_\pm) - \nu_\pm$ when $\epsilon$ is sufficiently small, so we obtain $
k - \nu  \leq \big(N(\Omega_+) - \nu_+\big) + \big(N(\Omega_-) - \nu_-\big)$ and the result follows.
\end{proof}

If Conjecture \ref{con:general} is true, then Theorem \ref{thm:position} implies that highly deficient eigenfunctions have nodal domains with large isoperimetric ratios. For instance, if $\phi_k$ is an eigenfunction with only two nodal domains, then one of them, say $\Omega_+$, must have $N(\Omega_+) \geq k/2$ and hence $\cI(\Omega_+) \geq c_1k/2$, where the constant $c_1$ only depends on the dimension. This is particular useful when one has a subsequence of eigenfunctions $(\phi_{j_{_k}})$ each having just two nodal domains; see \cite{CH53,L77,S25} for classical examples of such eigenfunctions, and \cite{JZ18} for a recent construction.

In general, denoting the nodal domains by $\Omega_1, \ldots, \Omega_\nu$, we have
\[
	k \leq N(\Omega_1) + \cdots +  N(\Omega_\nu) = \nu \overline{N(\Omega_\ast)},
\]
where $\overline{N(\Omega_\ast)}$ denotes the mean of the numbers $N(\Omega_1), \ldots, N(\Omega_\nu)$. Generalizing the two-dimensional result of Pleijel \cite{P56}, B\'erard and Meyer showed in \cite{BM82} that
\[
	\limsup_{k \to \infty} \frac{\nu(\phi_k)}{k} \leq \gamma(n)
\]
where $\gamma(n) < 1$ is a universal constant that only depends on the dimension. From this we obtain
\begin{align}\label{eq:Pleijel}
	\liminf_{k \to \infty} \overline{N(\Omega_\ast)} \geq \frac{1}{\gamma(n)}.
\end{align}
When $n=2$ we have $\gamma(2) = (2/j_{0,1})^2 < 0.7$, so \eqref{eq:Pleijel} yields $\liminf \overline{N(\Omega_\ast)} \geq 1.4$. This is not anything new, since Friedlander has already shown that $N(\Omega_i) \geq 2$ for every $i$, so the same is true of the mean. Thus, for \eqref{eq:Pleijel} to be of any use, one must have $\gamma(n) < 1/2$, as this would imply $\liminf \overline{N(\Omega_\ast)} > 2$. It was shown in \cite{HP16} that the constant $\gamma(n)$ is strictly decreasing in $n$, with $\gamma(n) < 1/2$ for $n \geq 3$. In fact, one has $\gamma(n) \leq C (2/e)^n$ for some uniform constant $C>0$.

\subsection{The size of the nodal set}\label{sec:size}
Recall that $\Sigma = \{\phi(x) = 0\}$ denotes the nodal set of a fixed eigenfunction. A well-known conjecture of Yau suggests that
\begin{align}\label{Yau}
	c_1 \sqrt\lambda \leq |\Sigma| \leq c_2 \sqrt\lambda,
\end{align}
where $\lambda$ is the eigenvalue corresponding to $\phi$. This result is known to be true when $n=2$ \cite{B78}, and on real analytic manifolds \cite{DF88}. Recently, Logunov showed that the lower bound on $|\Sigma|$ holds on a smooth manifold of any dimension \cite{L18lower}, and proved the upper bound $|\Sigma| \leq c_2 \lambda^\alpha$ for some constant $\alpha>1/2$ that depends only on $n$ \cite{L18upper}.

We observe here that the upper bound in \eqref{Yau} follows from Conjecture \ref{con:general} and an additional assumption on the nodal domains and nodal sets. Recall that $\Omega_1, \ldots, \Omega_\nu$ denote the nodal domains of an eigenfunction. If we assume that there exist constants $a$ and $b$ such that $\nu |\Omega_i| \geq a$ and $\nu |\p\Omega_i| \leq b |\Sigma|$, uniformly in $\lambda$, then it follows from Theorem \ref{thm:position} and Conjecture \ref{con:general} that
\begin{align*}
	k &\leq N(\Omega_1) + \cdots + N(\Omega_\nu) \\
	&\leq c_2 \left( \frac{|\p\Omega_1|^n}{|\Omega_1|^{n-1}} + \cdots + \frac{|\p\Omega_\nu|^n}{|\Omega_\nu|^{n-1}} \right) \\
	&\leq c_2 b^n a^{1-n} |\Sigma|^n.
\end{align*}
Furthermore, Weyl's law implies $\lambda_k^{n/2} \leq C k$ for some uniform constant $C$, so we obtain $\lambda_k^{n/2} \leq C |\Sigma|^n$, as desired. However, the assumed uniform bounds on the nodal domains and nodal sets are not known. 



We conclude by stating an equivalent form of Yau's conjecture in terms of the nodal deficiency. 
It follows from the argument of Pleijel \cite{P56} that $c_1 \lambda^{n/2} \leq \delta(\phi) \leq c_2 \lambda^{n/2}$, from which we see that \eqref{Yau} is, in fact, equivalent to the existence of uniform constants $c_1,c_2>0$ such that
\begin{align}
	c_1 |\Sigma|^n \leq \delta(\phi) \leq c_2 |\Sigma|^n
\end{align}
for all eigenfunctions $\phi$.
%
%
From this equivalence, Logunov's upper and lower bounds on the size of the nodal set immediately imply there exist positive constants $c_1,c_2$ and $\beta  = \beta(n)$ such that $c_1 |\Sigma|^{n/\beta} \leq \delta(\phi) \leq c_2 |\Sigma|^n$, where $\beta(2)= 1$ and $\beta(n) > 1$ for $n > 2$.

We find this formulation of Yau's conjecture particularly appealing in light of the formula $\delta(\phi) = \Mor(\Lambda_+ + \Lambda_-)$ for the nodal deficiency, which was proved in \cite{CJM17} and used in the proof of Theorem \ref{thm:position} above. In particular, it suggests an alternate route to studying the conjecture through the spectra of the Dirichlet-to-Neumann maps, $\Lambda_\pm$.

\section{Analytical results}\label{sec:analytical}

In this section, we consider a few separable examples, where the calculations are relatively explicit.

\subsection{Rectangles}\label{sec:rectangle}

We first consider an $n$-dimensional rectangle with side lengths $\ell_1, \ldots, \ell_n$. In this case, we are able to verify Conjecture \ref{con:general}.

\begin{theorem}\label{thm:rectangle}
There exist positive constants, $c_1(n)$ and $c_2(n)$, such that
\[
	c_1 \cI(R) \leq N(R) \leq c_2 \cI(R)
\]
for any rectangle $R \subset \bbR^n$.
\end{theorem}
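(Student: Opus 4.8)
The plan is to compute everything explicitly for the rectangle $R = \prod_{i=1}^n (0,\ell_i)$, where both the Dirichlet and Neumann spectra are known in closed form, and then reduce the two-sided bound $c_1 \cI(R) \le N(R) \le c_2 \cI(R)$ to an elementary lattice-point counting estimate. First I would recall that the Dirichlet eigenvalues are $\pi^2 \sum_i (m_i/\ell_i)^2$ with $m_i \ge 1$, so $\lambda_1 = \pi^2 \sum_i \ell_i^{-2}$, while the Neumann eigenvalues are $\pi^2 \sum_i (m_i/\ell_i)^2$ with $m_i \ge 0$. Hence $N(R)$ is exactly the number of integer tuples $(m_1,\dots,m_n) \in (\bbZ_{\ge 0})^n$ satisfying
\[
	\sum_{i=1}^n \frac{m_i^2}{\ell_i^2} \le \sum_{i=1}^n \frac{1}{\ell_i^2}.
\]
So $N(R) = \#\big( E \cap (\bbZ_{\ge 0})^n \big)$, where $E$ is the solid ellipsoid $\{ \sum_i x_i^2/\ell_i^2 \le \sum_i \ell_i^{-2}\}$, i.e. the ellipsoid with semi-axes $a_i := \ell_i \sqrt{\sum_j \ell_j^{-2}}$. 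Note each $a_i \ge 1$, which is exactly the point that makes the lattice count behave like the volume.

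Next I would estimate this lattice-point count from above and below by the volume of $E$ intersected with the positive orthant, which is $2^{-n}\omega_n \prod_i a_i = 2^{-n}\omega_n \big(\sum_j \ell_j^{-2}\big)^{n/2}\prod_i \ell_i$. For the lower bound, since each semi-axis $a_i \ge 1$, a standard packing argument (each lattice point in the orthant "owns" a unit cube contained in a slightly dilated ellipsoid, or conversely translates of the unit cube anchored at lattice points in $\tfrac12 E \cap (\bbZ_{\ge 0})^n$ tile a region inside $E$) gives $N(R) \ge c(n)\big(\sum_j \ell_j^{-2}\big)^{n/2}\prod_i \ell_i$ with $c(n)>0$ depending only on $n$; here the condition $a_i \ge 1$ is what prevents the count from collapsing when the rectangle is very thin. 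For the upper bound, covering each lattice point by the unit cube with that point as its "lower-left" corner shows $N(R) \le \#(\bbZ_{\ge 0}^n \cap E) \le \mathrm{vol}\big((E + [0,1]^n) \cap \bbR_{\ge 0}^n\big) \le \mathrm{vol}\big((1+c'/a_{\min})E \cap \bbR_{\ge 0}^n\big)$, and again using $a_i \ge 1$ (so $a_{\min}\ge 1$) this is bounded by $C(n)\big(\sum_j \ell_j^{-2}\big)^{n/2}\prod_i \ell_i$. Thus
\[
	N(R) \asymp_n \Big(\sum_{j=1}^n \ell_j^{-2}\Big)^{n/2} \prod_{i=1}^n \ell_i.
\]

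The final step is to show the right-hand side is comparable, with constants depending only on $n$, to $\cI(R) = |\p R|^n / |R|^{n-1}$. Here $|R| = \prod_i \ell_i$ and $|\p R| = 2\sum_i \prod_{j \ne i}\ell_j = 2\,|R|\sum_i \ell_i^{-1}$, so
\[
	\cI(R) = \frac{2^n |R|^n \big(\sum_i \ell_i^{-1}\big)^n}{|R|^{n-1}} = 2^n \Big(\sum_i \ell_i^{-1}\Big)^n \prod_i \ell_i.
\]
Comparing with the lattice-count asymptotics, it remains to check that $\big(\sum_i \ell_i^{-2}\big)^{n/2}$ and $\big(\sum_i \ell_i^{-1}\big)^n$ differ by at most a dimensional constant: by the norm equivalence on $\bbR^n$, $\sum_i \ell_i^{-1} \le \sqrt{n}\,\big(\sum_i \ell_i^{-2}\big)^{1/2}$ and $\big(\sum_i \ell_i^{-2}\big)^{1/2} \le \sum_i \ell_i^{-1}$, so $\big(\sum_i \ell_i^{-2}\big)^{n/2} \le \big(\sum_i \ell_i^{-1}\big)^n \le n^{n/2}\big(\sum_i \ell_i^{-2}\big)^{n/2}$. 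This gives $\cI(R) \asymp_n \big(\sum_i \ell_i^{-2}\big)^{n/2}\prod_i \ell_i \asymp_n N(R)$, which is the claim. The main obstacle is the lattice-point count: naive volume estimates lose control when the rectangle degenerates, and the crucial observation making the argument work is that the normalized semi-axes $a_i = \ell_i\sqrt{\sum_j \ell_j^{-2}}$ are all bounded below by $1$ regardless of the aspect ratio, so the error terms in the lattice-point-versus-volume comparison are genuinely controlled by dimensional constants only.
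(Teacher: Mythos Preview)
Your proof is correct and follows essentially the same route as the paper: both reduce $N(R)$ to counting nonnegative lattice points in the ellipsoid with semi-axes $a_i = \ell_i\big(\sum_j \ell_j^{-2}\big)^{1/2} \ge 1$, sandwich this count between dimensional multiples of the volume $\omega_n\prod_i a_i$ via a unit-cube packing/covering argument, and then use the $\ell^1$--$\ell^2$ norm equivalence on $(\ell_1^{-1},\dots,\ell_n^{-1})$ to match $\prod_i a_i$ with $\cI(R)$. The only difference is cosmetic: the paper carries out the cube step explicitly by proving the containments $\cE(a)\cap P \subset \bigcup_{m}\big(m+[0,1]^n\big) \subset \cE(2a)\cap P$ (the union over lattice points $m$ in $\cE(a)\cap\bbZ_{\ge 0}^n$) via $(m_i+1)^2 \le 2m_i^2 + 2$ together with $\sum_i \ell_i^{-2} = \rho^2$, which is precisely the clean way to exploit your key observation that every $a_i \ge 1$.
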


\begin{proof}

The Dirichlet eigenvalues are $(m_1\pi/\ell_1)^2 + \cdots + (m_n\pi/\ell_n)^2$
for $m_i \in \bbN$, while the Neumann eigenvalues are given by the same formula but with $m_i \in \bbN_0 = \bbN \cup \{0\}$. Therefore
\begin{align}
	N(R) = \#\left\{ (m_1, \ldots, m_n) \in \bbN_0^n : \frac{m_1^2}{\ell_1^2} + \cdots + \frac{m_n^2}{\ell_n^2} \leq \frac{1}{\ell_1^2} + \cdots + \frac{1}{\ell_n^2}\right\}.
\end{align}

This equals the number of nonnegative lattice points in the $n$-ellipsoid with axes $a_i = \ell_i\rho$, where $\rho^2 = \ell_1^{-2} + \cdots + \ell_n^{-2}$.
Denote this ellipsoid by $\cE(a)$, and let $L = \cE(a) \cap \bbN_0^n$, so $N(R) = \# L$. For each $m = (m_1,\ldots,m_n) \in L$ let $C_m$ denote the unit cube with smallest vertex at $m$, 
and define the set
\[
	E = \bigcup_{m \in L} C_m,
\]
so that $N(R) = \# L = |E|$. Finally, let $P \subset \bbR^n$ denote the first quadrant, where all coordinates are nonnegative. We claim that
\begin{align}\label{Econtain}
	\cE(a) \cap P \subset E \subset \cE(2a) \cap P,
\end{align}
where $\cE(2a)$ is the ellipsoid with axes $2 a_i = 2 \ell_i \rho$.

Let $x = (x_1,\ldots,x_n) \in \cE(a) \cap P$ and define $m = \lfloor x \rfloor \in \bbN^n_0$ componentwise. Then
\[
	\frac{m_1^2}{\ell_1^2} + \cdots + \frac{m_n^2}{\ell_n^2} \leq \frac{x_1^2}{\ell_1^2} + \cdots + \frac{x_n^2}{\ell_n^2} \leq \rho^2,
\]
hence $m \in L$ and $x \in C_m \in E$. On the other hand, for $m \in L$ we have
\[
	\frac{(m_1 + 1)^2}{\ell_1^2} + \cdots + \frac{(m_n + 1)^2}{\ell_n^2} \leq 4 \rho^2,
\]
hence $C_m \subset \cE(2a) \cap P$. It follows that $E \subset \cE(2a) \cap P$, so we have verified \eqref{Econtain}, and conclude that the volume $|E|= N(R)$ satisfies
\begin{align}\label{rec:Nbound}
	\frac{\omega_n}{2^n} a_1 \cdots a_n \leq |E| \leq  \omega_n a_1 \cdots a_n,
\end{align}
where $\omega_n$ is the volume of the unit ball in $\bbR^n$.

We next estimate the isoperimetric ratio $\cI(R)$. The rectangle has volume $|R| = \ell_1 \cdots \ell_n$, and the boundary has area
\[
	|\p R| = 2 \sum_{i=1}^n \ell_1 \cdots \widehat \ell_i \cdots \ell_n,
\]
where $\ell_1 \cdots \widehat \ell_i \cdots \ell_n$ denotes the product with the $i$th term omitted. Note that
\[
	\frac{1}{\ell_1^2} + \cdots + \frac{1}{\ell_n^2} = \frac{\sum_{i=1}^n (\ell_1 \cdots \widehat \ell_i \cdots \ell_n)^2}{(\ell_1 \cdots \ell_n)^2}
\]
and hence
\[
	a_1 \cdots a_n = \frac{\left(\sum_{i=1}^n (\ell_1 \cdots \widehat \ell_i \cdots \ell_n)^2\right)^{n/2}}{(\ell_1 \cdots \ell_n)^{n-1}}.
\]

Using the inequalities
\[
	\frac1n \left( \sum_{i=1}^n x_i \right)^2 \leq \sum_{i=1}^n x_i^2 \leq \left( \sum_{i=1}^n x_i \right)^2
\]
for nonnegative numbers $(x_i)$, we obtain
\begin{align}\label{rec:Ibound}
	\frac{1}{2^n n^{n/2}} \frac{|\p R|^n}{|R|^{n-1}} \leq a_1 \cdots a_n \leq \frac{1}{2^n} \frac{|\p R|^n}{|R|^{n-1}}.
\end{align}

Combining \eqref{rec:Nbound} and \eqref{rec:Ibound} completes the proof.
\end{proof}

For a rectangle in $\bbR^2$ we can obtain a more precise result. Assume without loss of generality that the side lengths are $1$ and $\ell \geq 1$. The inequality
\[
	m^2 + \frac{n^2}{\ell^2} \leq 1 + \frac{1}{\ell^2}
\]	
is only satisfied by $(0,0)$, $(1,0)$, $(1,1)$ and $(0,n)$ with $1 \leq n^2 \leq 1 + \ell^2$, so we have
\begin{align}\label{rec:2D}
\begin{split}
	N(R) &= 3 + \lfloor \sqrt{\ell^2+1}\rfloor \\
	\cI(R) &= 4(1+\ell)^2/\ell.
\end{split}
\end{align}
In particular, we see that $N(R) / \cI(R) \to 1/4$ as $\ell \to \infty$.

%
%
%
%
%
%
%

\subsection{The unit ball}
Levine and Weinberger observed in \cite{LW86} that $\mu_3 < \lambda_1 < \mu_4$ on the disc in $\bbR^2$, and $\mu_4 < \lambda_1 < \mu_5$ for the ball in $\bbR^3$. Here, we investigate extensions of these inequalities to the unit ball, $B^n \subset \bbR^n$, for $n>3$. Since $B^n$ is strictly convex, \cite[Theorem 2.1]{LW86} implies $\mu_{n+1}(B^n) < \lambda_1(B^n)$. However, we observe numerically that the inequality $\lambda_1(B^n) < \mu_{n+2}(B^n)$ is false for $n > 3$, and prove that the number of Neumann eigenvalues below $\lambda_1(B^n)$ grows faster than any polynomial function of $n$.

The eigenvalue equation $\Delta u + \lambda u$ is separable, with radial solutions given by the so-called ultraspherical Bessel functions, $r^{1-n/2} J_{n/2+\ell-1}(\sqrt\lambda r)$, for $\ell \in \bbN \cup\{0\}$; see \cite{AB93,LS94}. The positive Neumann eigenvalues are, thus, of the form $(p_{n/2,k}^{(\ell)})^2$, where $p_{\nu,k}^{(\ell)}$ denotes the $k$th positive zero of  $[x^{1-\nu} J_{\nu+\ell-1}(x)]'$. The corresponding angular solution is a spherical harmonic of degree $\ell$, so eigenvalues corresponding to $\ell=0$ are simple, those corresponding to $\ell=1$ have multiplicity $n$, and those corresponding to $\ell > 1$ have multiplicity
\begin{align}\label{multiplicity}
	\binom{n+\ell-1}{n-1} - \binom{n+\ell-3}{n-1}.
\end{align}
(It is immediate that $(p_{n/2,k}^{(\ell)})^2$ has multiplicity at least \eqref{multiplicity}; the fact that the multiplicity is no greater is a consequence of the fact that, for any $m \in \bbN$, the functions $[x^{1-\nu} J_{\nu+\ell-1}(x)]'$ and $[x^{1-\nu} J_{\nu+\ell+m-1}(x)]'$ have no zeros in common, which was established in \cite[Lemma 2.5]{HP16}.)

Similarly, the first Dirichlet eigenvalue is given by $\lambda_1 = \big(j_{n/2-1,1}\big)^2$, where $j_{\nu,1}$ denotes the first positive zero of $J_{\nu}$. It follows from standard properties of Bessel functions (see \cite{LS94}) that $j_{\nu,k} = p_{\nu,k}^{(0)}$.


Since $j_{\nu,1}$ is an increasing function of $\nu$ (provided $\nu>0$, see \cite[p. 508]{W44}), we have $\sqrt{\lambda_1} = j_{n/2-1,1} < j_{n/2,1} = p_{n/2,1}^{(0)}$, and hence $p_{n/2,k}^{(0)} > \sqrt{\lambda_1}$ for all $k$. Moreover, it follows from Dixon's interlacing theorem \cite[p. 480]{W44} that the zeros of $[x^{1-\nu} J_{\nu+\ell-1}(x)]'$ interlace with those of $J_{\nu+\ell-1}(x)$. In particular, $J_{n/2}$ must have a zero between $p_{n/2,1}^{(1)}$ and $p_{n/2,2}^{(1)}$. Therefore $p_{n/2,2}^{(1)} > j_{n/2,1} > \sqrt{\lambda_1}$, and so $p_{n/2,k}^{(1)} > \sqrt{\lambda_1}$ for $k \geq 2$.

On the other hand, the inequality 
\begin{align}\label{LSineq}
	\frac{2\ell(\nu+\ell)(\nu+\ell+1)}{\nu+2\ell+1} < \big(p_{\nu,1}^{(\ell)}\big)^2 < 2\ell(\nu + \ell)
\end{align}
of Lorch and Szego \cite{LS94} implies that $p_{\nu,1}^{(1)} < p_{\nu,1}^{(\ell)}$ for $\nu>0$ and $\ell \geq 2$. Therefore $(p_{n/2,1}^{(1)})^2$ is the smallest positive Neumann eigenvalue of $B^n$, with multiplicity $n$, and the only other Neumann eigenvalues potentially less than $\lambda_1$ are $(p_{n/2,k}^{(\ell)})^2$ for $\ell \geq 2$. Numerically, we observe that $p_{n/2,1}^{(2)} < j_{n/2-1,1}$ for $n \geq 4$; see Table \ref{table:zeros}. Thus there are more than $n+1$ Neumann eigenvalues below the first Dirichlet eigenvalue whenever $n \geq 4$. In terms of the quantity $N$ defined in \eqref{eq:Ndef}, this means $N(B^n) > n+1$ when $n \geq 4$.

\begin{table}
\begin{center}
\begin{tabular}{c|c|c|c|c}
	$n$ & $p_{n/2,1}^{(1)}$ & $p_{n/2,1}^{(2)}$ & $p_{n/2,1}^{(3)}$ & $j_{n/2-1,1}$\\   \hline 
	2 & 1.84 & 3.05 & 4.42 & 2.40 \\  
	3 & 2.08 & 3.34 & 4.51 & 3.14 \\
	4 & 2.30 & 3.61 & 4.81 & 3.83 \\
	5 & 2.52 & 3.86 & 5.09 & 4.49 \\
	6 & 2.69 & 4.10 & 5.37 & 5.14 \\
	7 & 2.86 & 4.33 & 5.63 & 5.76 \\ \hline
\end{tabular}
\end{center}
\caption{Comparing zeros of ultraspherical Bessel functions with the first zero of $J_{n/2-1}$}
\label{table:zeros}
\end{table}

More precisely, when $p_{n/2,1}^{(2)} < j_{n/2-1,1}$ we obtain an additional Neumann eigenvalue below $\lambda_1$. The angular part of the eigenfunction is a spherical harmonic of degree $\ell$, so the multiplicity is
\[
	\binom{n+1}{n-1} - \binom{n-1}{n-1} = \frac12 n(n+1) - 1.
\]
Adding this to the $n+1$ Neumann eigenvalues already known to exist below $\lambda_1$, we find that $N(B^n) \geq \frac12 n(n+3)$.

This argument can be generalized, using the fact that for any fixed $\ell$, we have $p_{n/2,1}^{(\ell)} < j_{n/2-1,1}$ once $n$ is sufficiently large. (For $\ell=3$ we observe numerically that $n=7$ suffices; see Table \ref{table:zeros}.) As a result, we find that $N(B^n)$ grows faster than any polynomial function in $n$.

\begin{theorem}\label{thm:ball}
For any $\ell \in \bbN$ there exists $n_0 \in \bbN$ such that $N(B^n) \geq n^\ell$ for all $n \geq n_0$.
\end{theorem}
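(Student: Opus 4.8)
The plan is to show that the number of Neumann eigenvalues of $B^n$ lying below $\lambda_1(B^n)$ grows superpolynomially, by exploiting the high multiplicities of the eigenvalues associated to spherical harmonics of fixed degree $\ell$. The key mechanism has already been set up in the excerpt: for each fixed $\ell$, the Lorch--Szego inequality \eqref{LSineq} gives $\big(p_{n/2,1}^{(\ell)}\big)^2 < 2\ell(n/2+\ell) = \ell n + 2\ell^2$, while the first Dirichlet eigenvalue satisfies $\lambda_1 = \big(j_{n/2-1,1}\big)^2$. It is a classical fact (McMahon-type asymptotics, or the lower bound $j_{\nu,1} > \nu$ valid for $\nu > 0$, sharpened by $j_{\nu,1} = \nu + c\nu^{1/3} + O(\nu^{-1/3})$) that $j_{n/2-1,1}^2 \geq (n/2-1)^2$, which grows quadratically in $n$. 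Hence for each fixed $\ell$ there is $n_0(\ell)$ with $\big(p_{n/2,1}^{(\ell)}\big)^2 < \ell n + 2\ell^2 < (n/2-1)^2 \leq \lambda_1$ for all $n \geq n_0(\ell)$; in other words, the lowest Neumann eigenvalue in the $\ell$-th angular sector falls below $\lambda_1$ once $n$ is large enough.

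Next I would account for multiplicity. When $\big(p_{n/2,1}^{(\ell)}\big)^2 < \lambda_1$, this eigenvalue contributes to $N(B^n)$ with multiplicity $\binom{n+\ell-1}{n-1} - \binom{n+\ell-3}{n-1}$, as recorded in \eqref{multiplicity}. For $\ell \geq 2$ this is a polynomial in $n$ of degree exactly $\ell - 1$ (the two binomial coefficients are polynomials of degree $\ell-1$ with the same leading term, so the difference has degree $\ell-2$ --- let me be careful here). In fact $\binom{n+\ell-1}{n-1}$ is a polynomial in $n$ of degree $\ell$ with leading coefficient $1/\ell!$; subtracting $\binom{n+\ell-3}{n-1}$, whose degree is also $\ell$ with the same leading coefficient $1/\ell!$, leaves a polynomial of degree $\ell - 1$. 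So the multiplicity of the single eigenvalue $\big(p_{n/2,1}^{(\ell)}\big)^2$ is $\sim \tfrac{2}{(\ell-1)!}\, n^{\ell-1}$ as $n \to \infty$. Therefore, taking $\ell' = \ell + 1$ and $n$ at least $n_0(\ell')$ large enough that this multiplicity exceeds $n^\ell$, we get $N(B^n) \geq n^\ell$, which is exactly the claimed bound. I would state the argument in terms of a single well-chosen angular degree rather than summing contributions, since one sector already suffices.

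The main obstacle --- really the only nontrivial point --- is making the inequality $\big(p_{n/2,1}^{(\ell)}\big)^2 < \lambda_1(B^n) = j_{n/2-1,1}^2$ rigorous for all large $n$ with $\ell$ fixed, rather than merely numerically for the small cases in Table \ref{table:zeros}. This is clean: the upper bound on $p_{\nu,1}^{(\ell)}$ is the explicit Lorch--Szego bound \eqref{LSineq}, giving $O(n)$ growth, and the lower bound $j_{\nu,1} \geq \nu$ (valid for $\nu \geq 0$; see \cite{W44}) or any of the standard refinements gives $\lambda_1 \geq (n/2-1)^2$, which is $\Omega(n^2)$. Since a quadratic eventually dominates a linear function, the crossover $n_0(\ell)$ exists, and one can even write down an explicit value by solving $\ell n + 2\ell^2 = (n/2 - 1)^2$; no delicate Bessel asymptotics are needed. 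The rest is the elementary observation that a degree-$(\ell-1)$ polynomial in $n$ with positive leading coefficient eventually exceeds $n^{\ell-1}$, hence a fortiori one obtains $n^{\ell-2}$, so relabelling $\ell$ yields the statement as written; combining the two "eventually" thresholds gives the required $n_0$.
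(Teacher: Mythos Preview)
Your approach is exactly the paper's: use the Lorch--Szego upper bound \eqref{LSineq} together with $j_{\nu,1} > \nu$ to force $(p_{n/2,1}^{(\ell)})^2 < \lambda_1$ for large $n$, then invoke the multiplicity \eqref{multiplicity} and relabel $\ell$ to absorb the constant. So the strategy is correct and matches the paper.

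However, your multiplicity computation is wrong at the point you flagged ``let me be careful here.'' You write that $\binom{n+\ell-3}{n-1}$ has degree $\ell$ in $n$ with leading coefficient $1/\ell!$, so that the difference drops to degree $\ell-1$. In fact $\binom{n+\ell-3}{n-1} = \binom{n+\ell-3}{\ell-2}$ is a polynomial of degree $\ell-2$ in $n$, so the difference in \eqref{multiplicity} has degree exactly $\ell$, with leading term $n^\ell/\ell!$ (this is what the paper states). Your claimed asymptotic $\tfrac{2}{(\ell-1)!}\,n^{\ell-1}$ is therefore an underestimate. This error is harmless for the final conclusion, because your relabeling step still goes through: a degree-$\ell$ (or even degree-$(\ell-1)$) polynomial with positive leading coefficient eventually dominates $n^{\ell-1}$ regardless of the constant, so shifting $\ell$ by one or two recovers the statement. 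But note that your sentence ``a degree-$(\ell-1)$ polynomial in $n$ with positive leading coefficient eventually exceeds $n^{\ell-1}$'' is false as written when the leading coefficient is less than $1$; you need to drop one more power, as you then do. With the correct degree $\ell$, the paper's version is cleaner: apply the argument at degree $\ell+1$ to get $N(B^n) \geq n^{\ell+1}/(2(\ell+1)!)$, then take $n \geq 2(\ell+1)!$.
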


\begin{proof}
From the upper bound in \eqref{LSineq} and the lower bound $j_{\nu,1} > \nu$ (see \cite[p. 485]{W44}), we find that $\big(p_{n/2,1}^{(2)}\big)^2 < \lambda_1$ if $2\ell(n/2 + \ell) \leq (n/2-1)^2$. For fixed $\ell$, this will be satisfied by all sufficiently large $n$. For any such $n$, there is thus a Neumann eigenvalue below $\lambda_1$, of multiplicity
\[
	\binom{n+\ell-1}{n-1} - \binom{n+\ell-3}{n-1} = \frac{n^\ell}{\ell!} + \text{ lower order terms},
\]
so for large enough $n$, we can guarantee $N(B^n) \geq n^\ell/(2 \ell!)$.

Applying the above argument to $\ell+1$, we obtain $N(B^n) \geq n^{\ell+1}/(2 (\ell+1)!)$ when $n$ is sufficiently large, say $n \geq n_1$. Thus, for $n \geq \max\{n_1,2(\ell+1)!\}$ we have $N(B^n) \geq n^\ell$.
\end{proof}

%
%
%

\section{Numerical results}\label{sec:numerical}
In this section, we present the numerical results that motivated Conjecture \ref{con:2D}.

Using a finite-element method (FEM), we compute $N(\Omega)$ and plot it against $\cI(\Omega)$ for different choices of $\Omega$. Note that the size of $\Omega$ is irrelevant as both quantities are scale invariant.

\subsection{Overview of FEM} 
In general, a finite-element method approximates solutions to either a PDE or the associated eigenvalue problem by projecting the weak form of the problem onto a finite-dimensional subspace \cite{DBraess_2001, SCBrenner_LRScott_1994a}.  For a symmetric and positive-definite operator, as considered here, a standard Galerkin projection is typically chosen, with the finite-dimensional approximation space determined by low-order piecewise polynomial basis functions on an appropriate discretization of the domain, $\Omega$.  Considering the weak form of the Laplacian eigenvalue problem, finding $u\in \mathcal{V} \subset H^1(\Omega)$ such that
\[
\int_\Omega \nabla u \cdot \nabla v = \lambda \int_\Omega uv \text{ for all }v\in\mathcal{V},
\]
we define a space $\mathcal{V}^h \subset \mathcal{V}$ in terms of a finite-dimensional basis, and simply solve the eigenvalue problem restricted to $\mathcal{V}^h$ in place of the continuum weak form.  Once restricted to a finite-dimensional basis set, the approximate eigenvalue calculation becomes a standard matrix eigenvalue problem, for which efficient numerical methods are well-known \cite{YSaad_2011a}. The form above is equally valid for both the Dirichlet and Neumann eigenvalue problems (or combinations of these boundary conditions), simply by appropriately choosing $\mathcal{V} = H^1_0(\Omega)$ and $\mathcal{V} = H^1(\Omega)$, respectively. For the Dirichlet problem, the boundary condition is enforced strongly, as it is automatically satisfied by all test functions in $\mathcal{V}$, whereas, for the Neumann problem, the boundary condition is enforced weakly.


In the calculations that follow, we consider only polygonal domains, $\Omega$.  As such, we directly discretize the domain by considering triangulations, $\mathcal{T}^h$, such that $\Omega = \cup_{T\in\mathcal{T}^h} T$, where the union is taken over planar triangles.  The superscript $h$ is given to indicate the discrete nature of the triangulation (and the associated approximation space, $\mathcal{V}^h$), and may be taken to be the maximum edge length (or diameter) of any triangle in $\mathcal{T}^h$, for example.  The approximation space, $\mathcal{V}^h$, is then defined as
\[
\mathcal{V}^h = \{ u \in C^0(\Omega) \mid u_T \in \mathcal{P}^k(T) \text{ for every }T\in\mathcal{T}^h \},
\]
where $u_T$ is the restriction of $u$ to triangle $T$, and $\mathcal{P}^k(T)$ is the space of polynomials of total degree no more than $k$ on $T$.  The calculations below are computed using the FEniCS finite-element package \cite{MAlnaes_etal_2015a}, with eigenvalue calculations performed using the SLEPc package \cite{Hernandez:2005:SSF}.

\subsection{Rectangles}
Let $R_\ell$ denote the rectangle with side lengths of $1$ and $\ell \geq 1$. Our numerical results, shown in Figure  \ref{fig:rectanglecomb}, are consistent with the explicit formulas for $N(R_\ell)$ and $\cI(R_\ell)$ given in \eqref{rec:2D}. In particular, the graph of $N$ vs. $\cI$ is asymptotic to a line of slope $1/4$ as $\cI \to \infty$.

\begin{figure}
	\includegraphics[scale=0.45]{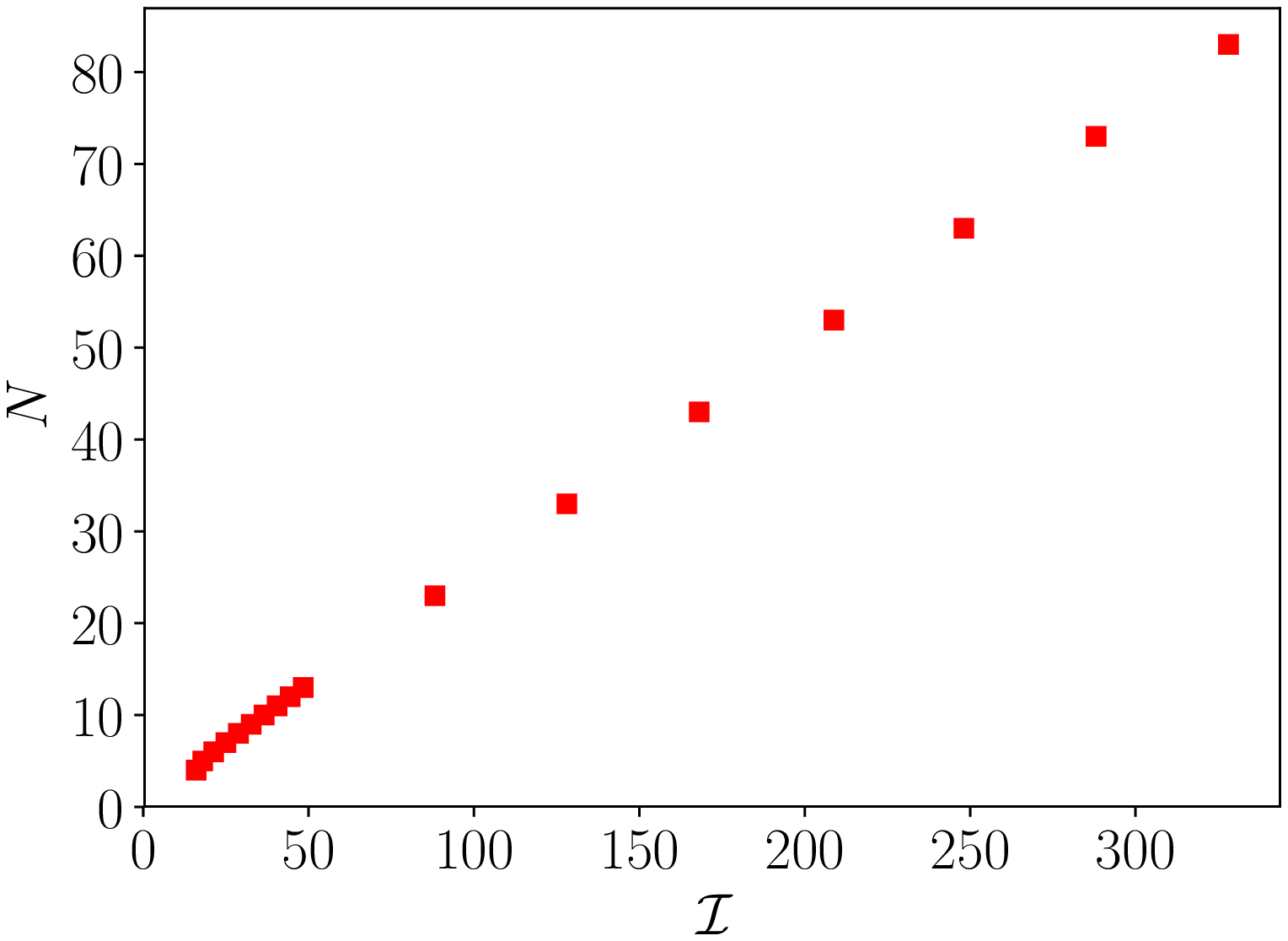}
	\includegraphics[scale=0.45]{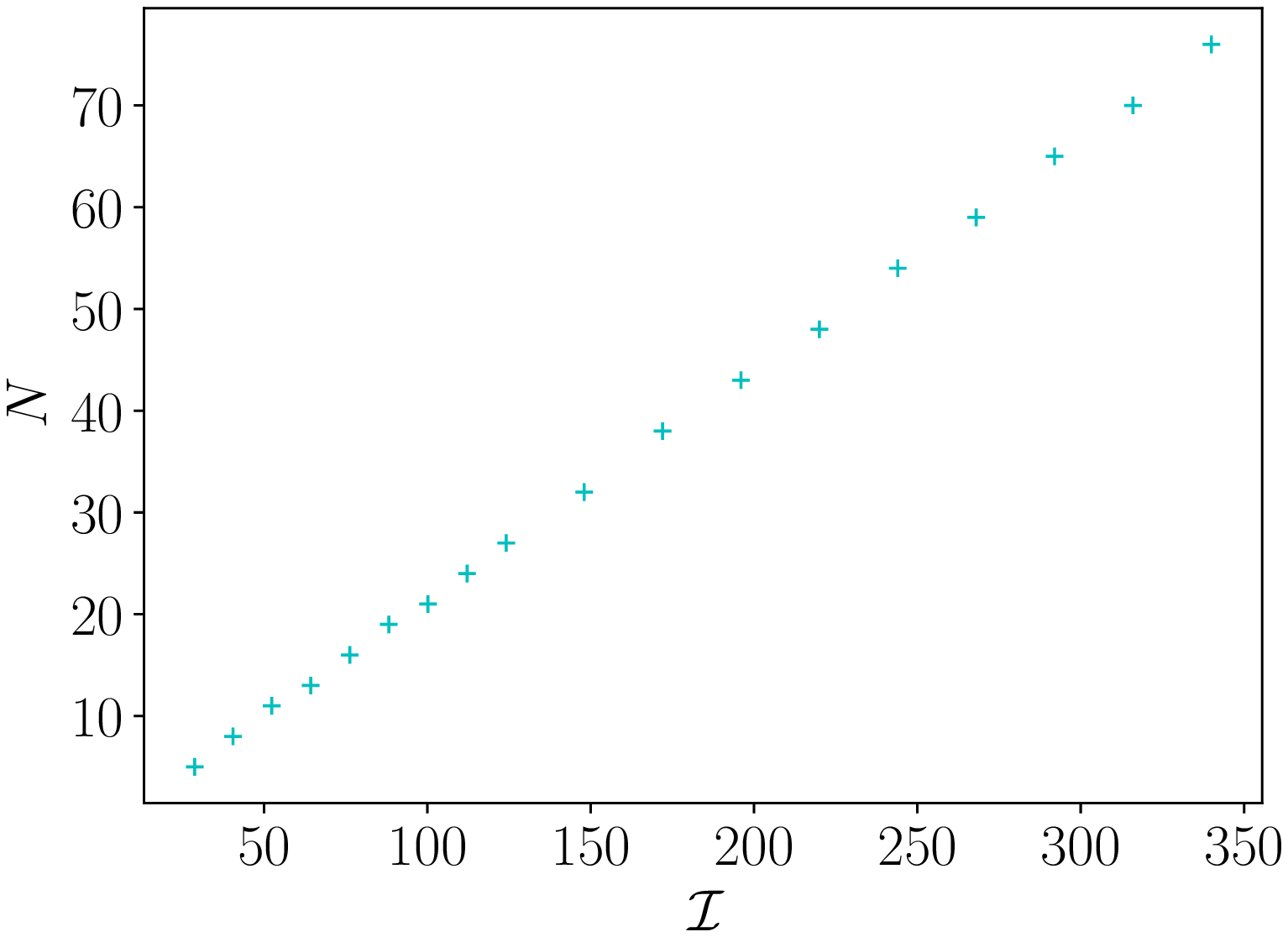}
\caption{$N$ vs. $\cI$ for rectangles (left) and combs (right)}
\label{fig:rectanglecomb}
\end{figure}

\subsection{Combs}
We next consider a family of so-called ``comb" domains. The comb with $m$ teeth, denoted $C_m$, is the union of $m$ $1\times 2$ rectangles with $m-1$ squares with unit side length, as shown in Figure \ref{fig:CW}. It has area $3m-1$ and perimeter $6m$; hence, the isoperimetric ratio $\cI(C_m) = 36 m^2/(3m-1)$ is approximately linear in $m$. The corresponding values of $N(C_m)$ are shown in Figure \ref{fig:rectanglecomb}.

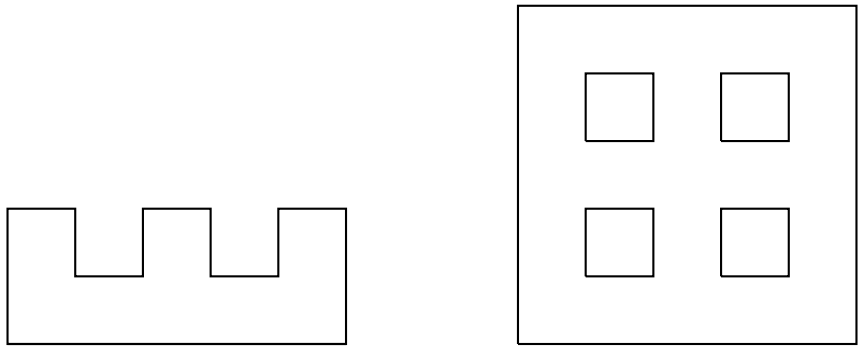
\begin{figure}
\begin{tikzpicture}[scale=0.9]
	\draw[thick] (0,0) -- (5,0) -- (5,2) -- (4,2) -- (4,1) -- (3,1) -- 
		(3,2) -- (2,2) -- (2,1) -- (1,1) -- (1,2) -- (0,2) -- (0,0) -- (5,0);
\end{tikzpicture}
\hspace{2cm}
\begin{tikzpicture}[scale=0.9]
	\draw[thick] (0,0) -- (5,0) -- (5,5) -- (0,5) -- (0,0);
	\draw[thick] (1,1) -- (1,2) -- (2,2) -- (2,1) -- (1,1);
	\draw[thick] (3,1) -- (3,2) -- (4,2) -- (4,1) -- (3,1);
	\draw[thick] (1,3) -- (1,4) -- (2,4) -- (2,3) -- (1,3);
	\draw[thick] (3,3) -- (3,4) -- (4,4) -- (4,3) -- (3,3);
\end{tikzpicture}
\caption{The comb $C_3$ (left) and the waffle $W_2$ (right)}
\label{fig:CW}
\end{figure}

\begin{figure}
	\includegraphics[scale=0.5]{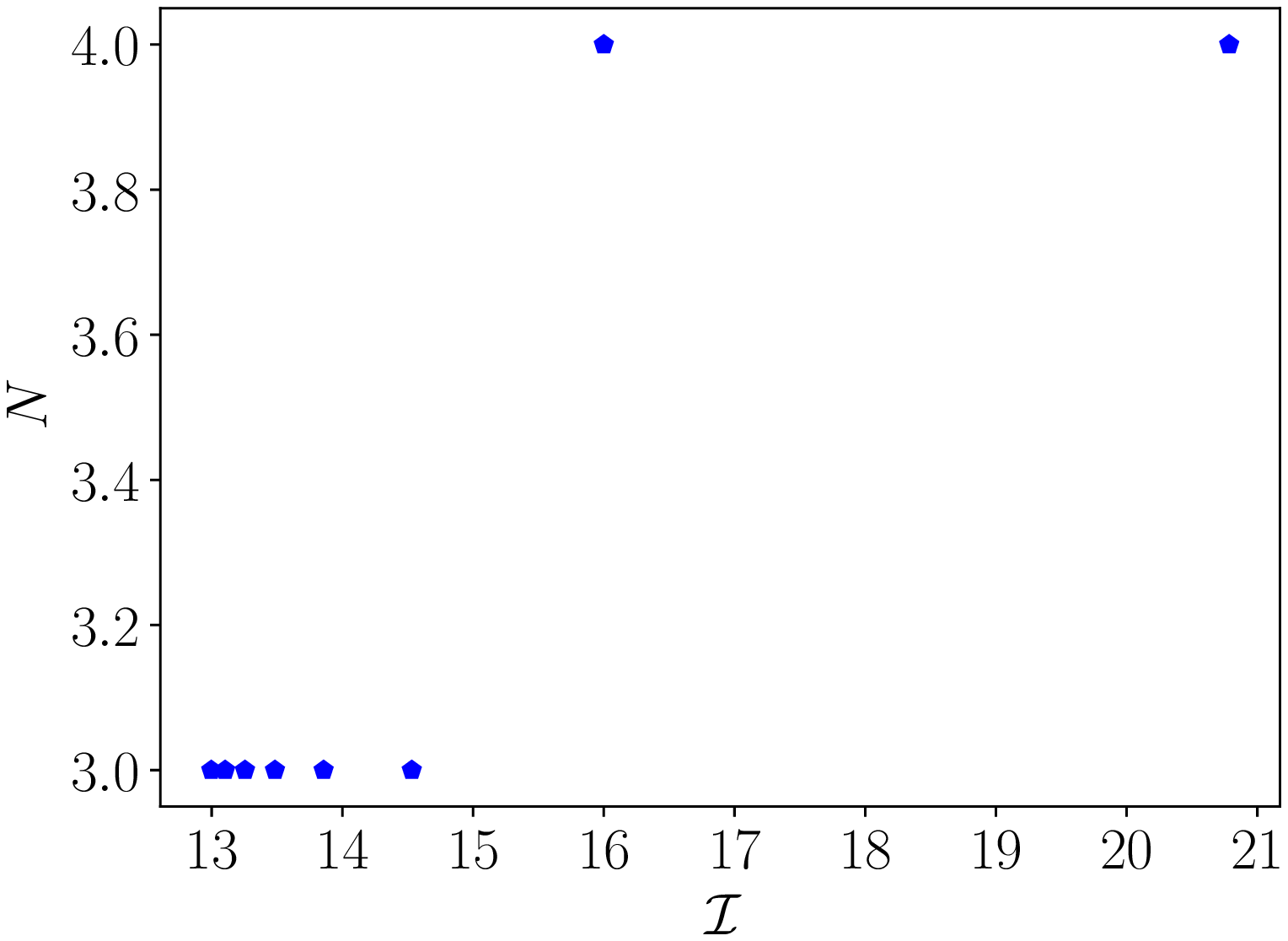}
	\includegraphics[scale=0.5]{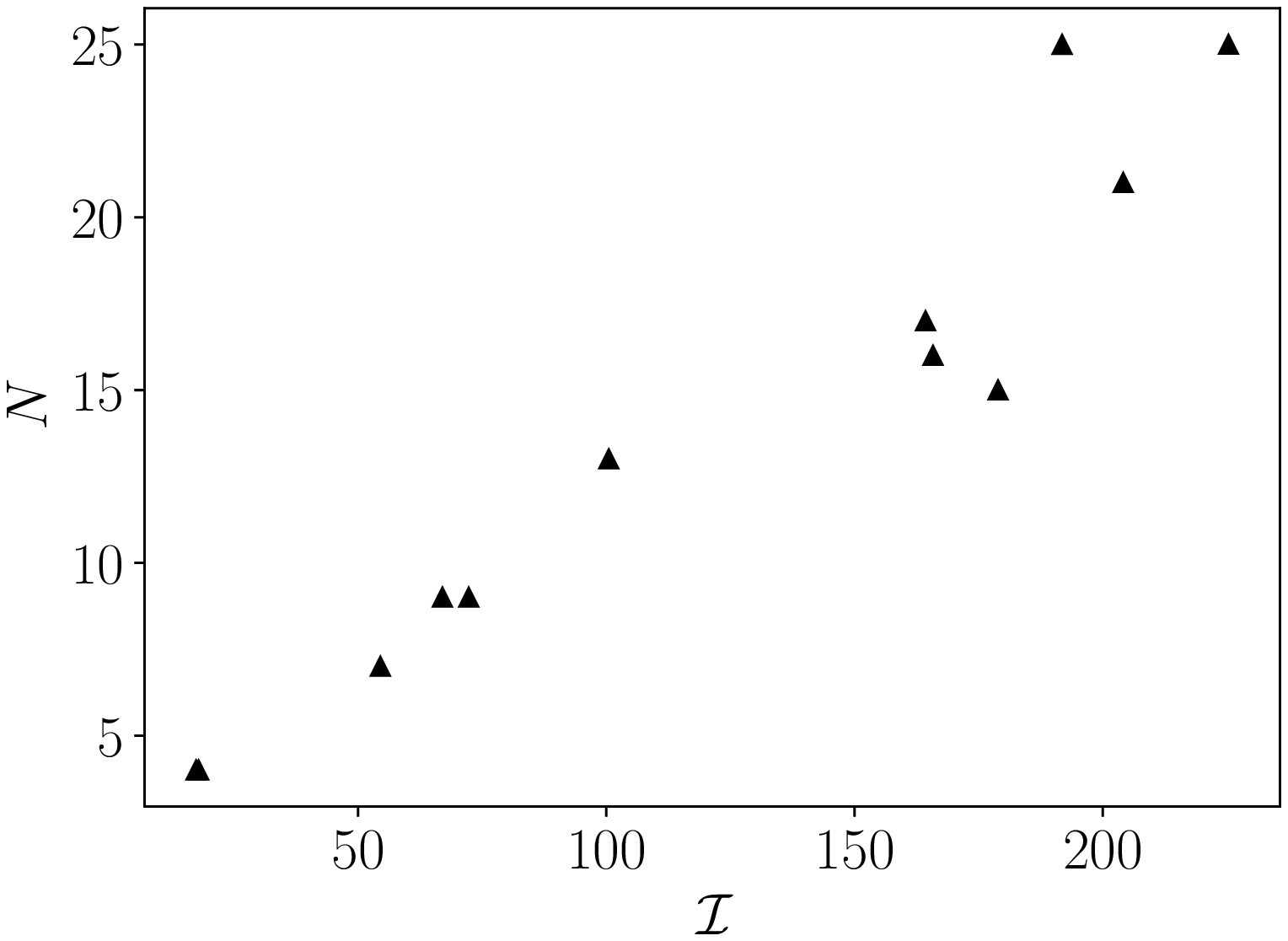}
\caption{$N$ vs. $\cI$ for regular polygons (left) and random polygons (right)}
\label{fig:poly}
\end{figure}

\subsection{Regular polygons}
Let $P_m$ be a regular $m$-sided polygon. One easily calculates the isoperimetric ratio, $\cI(P_m) = 4m \tan(\pi/m)$, which decreases towards $4\pi$ as $m\to\infty$. On the other hand, it is also known (see, for instance, \cite[Theorem VI.10]{CH53}) that the Dirichlet and Neumann eigenvalues satisfy
\[
	\lambda_k(P_m) \to \lambda_k(D), \quad \mu_k(P_m) \to \mu_k(D)
\]
as $m \to \infty$, where $D \subset \bbR^2$ is the unit disc. Since $\mu_3(D) < \lambda_1(D) < \mu_4(D)$, we obtain $\mu_3(P_m) < \lambda_1(P_m) < \mu_4(P_m)$, and hence $N(P_m) = 3$, for all sufficiently large $m$. We observe numerically that it suffices to take $m = 5$; see Figure \ref{fig:poly}.

\subsection{Random polygons}\label{sec:random}
We next consider a family of random polygons, generated by the following algorithm.

First, three distinct random points within the unit square are chosen and ordered counter clockwise in a vertex list: $(v_1,v_2,v_3)$. Then, until the desired number of vertices is achieved, new vertices are added as follows: 
\renewcommand{\labelenumi}{\arabic{enumi})}
\begin{enumerate}
	\item Generate a random point $P$ distinct from the existing vertices $(v_1,\ldots,v_m)$, and select a random index $i \in \{1,\ldots,m\}$ for a position in the current list of vertices.
	\item Check if creating edges between $P$ and the vertices $v_i$ and $v_{i+1}$ would cause any edges in the new polygon to intersect. If no edges intersect, then $P$ is added to the vertex list between $v_i$ and $v_{i+1}$.
	\item If an intersection occurs, replace $i$ with $i+1 \ (\operatorname{mod} m)$ and go to step 2.
	\item If $P$ cannot be added between any adjacent vertices without causing an intersection, return to step 1. (The necessity of this step is demonstrated by Figure \ref{fig:counter}.)
\end{enumerate}

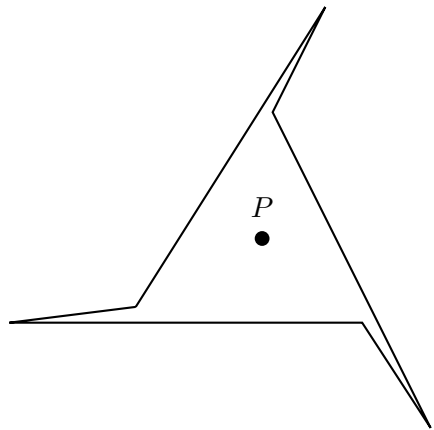
\begin{figure}
\begin{tikzpicture}[scale=1.4]
	\draw[thick] (-0.8,-0.85) -- (1,2) -- (0.5,1) -- (2,-2) -- (1.35,-1) -- (-2,-1) 
	-- (-0.8,-0.85);
	\fill (0.4,-0.2) circle[radius=2pt];
	\node at (0.4,0.1) {$P$};
\end{tikzpicture}
\caption{The point $P$ cannot be added between any adjacent vertices without causing an intersection in the resulting seven-sided polygon}
\label{fig:counter}
\end{figure}

Figure \ref{fig:randomsteps} tracks the evolution of $N$ and $\cI$ throughout this process, for two different realizations of this algorithm. The resulting domains tend to be highly non-convex; some representative 30-sided examples are shown in Figure \ref{fig:random}. In Figure \ref{fig:poly}, we plot $N$ vs. $\cI$ for twelve such random polygons, generating two each with 5, 10, 15, 20, 25 and 30 sides.


\begin{figure}
	\includegraphics[scale=0.5]{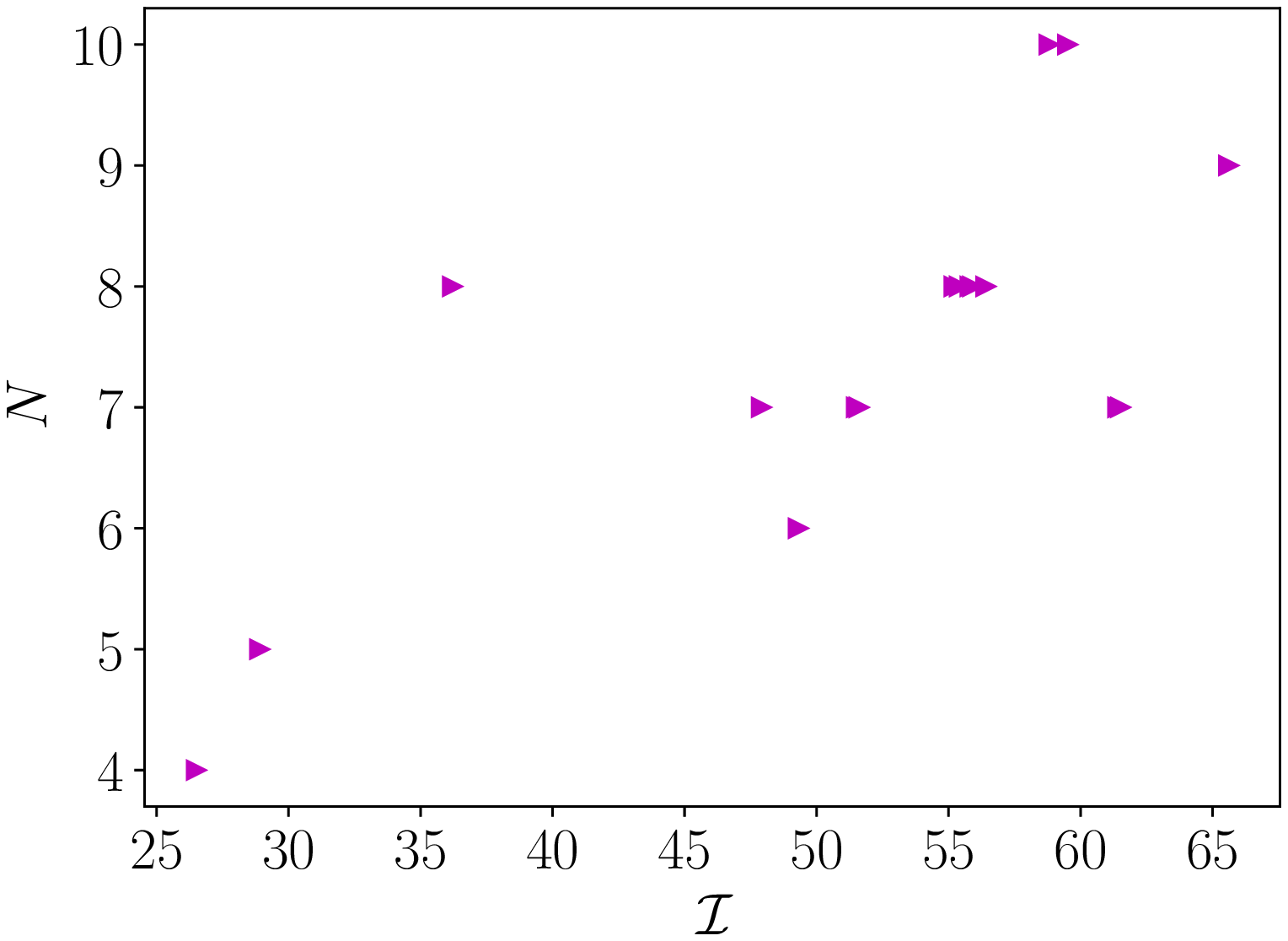}
		\includegraphics[scale=0.5]{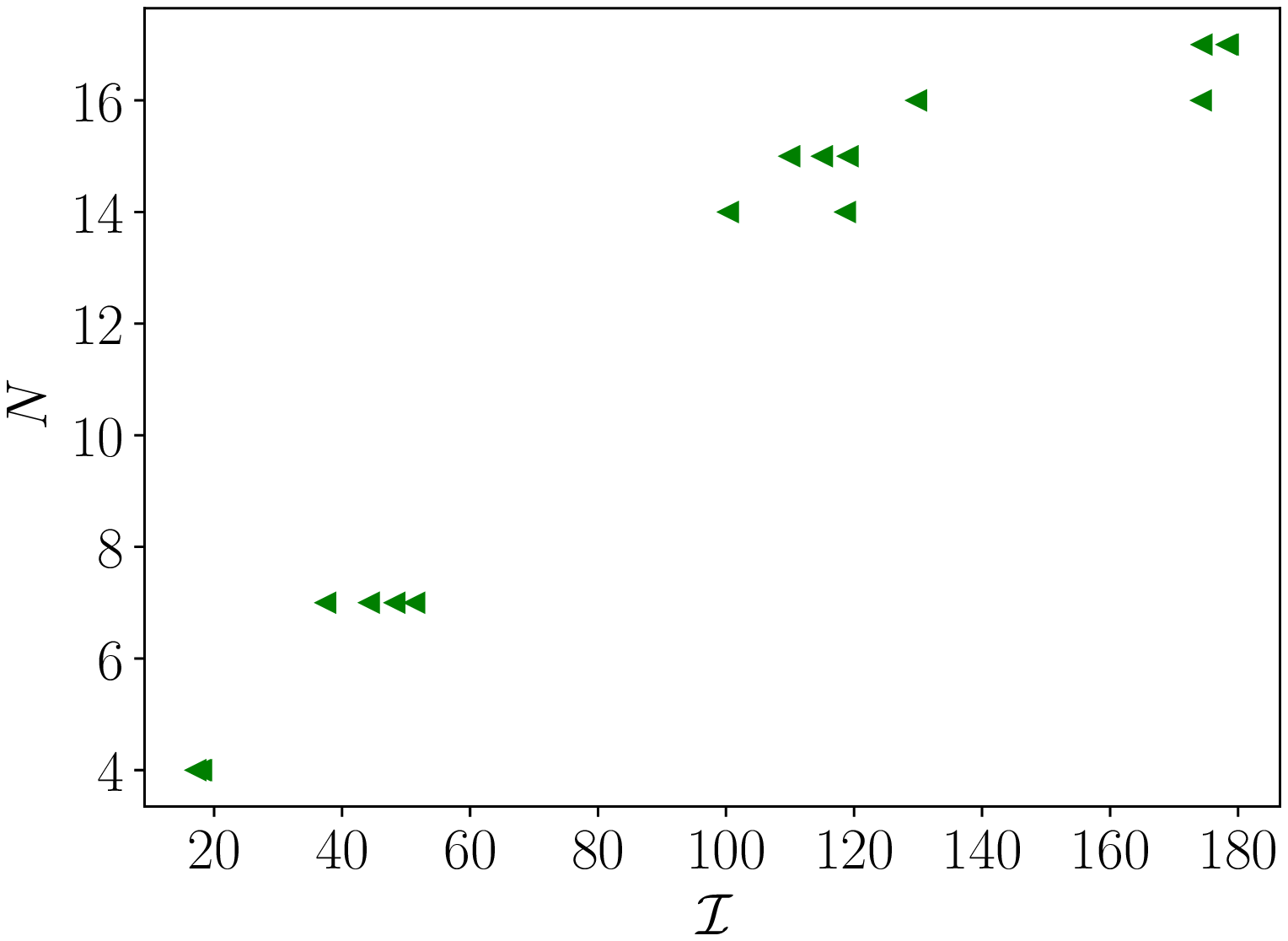}
\caption{Evolution of $N$ and $\cI$ (as vertices are added) for two realizations of the random polynomial generator}
\label{fig:randomsteps}
\end{figure}

\begin{figure}
	\includegraphics[scale=0.5]{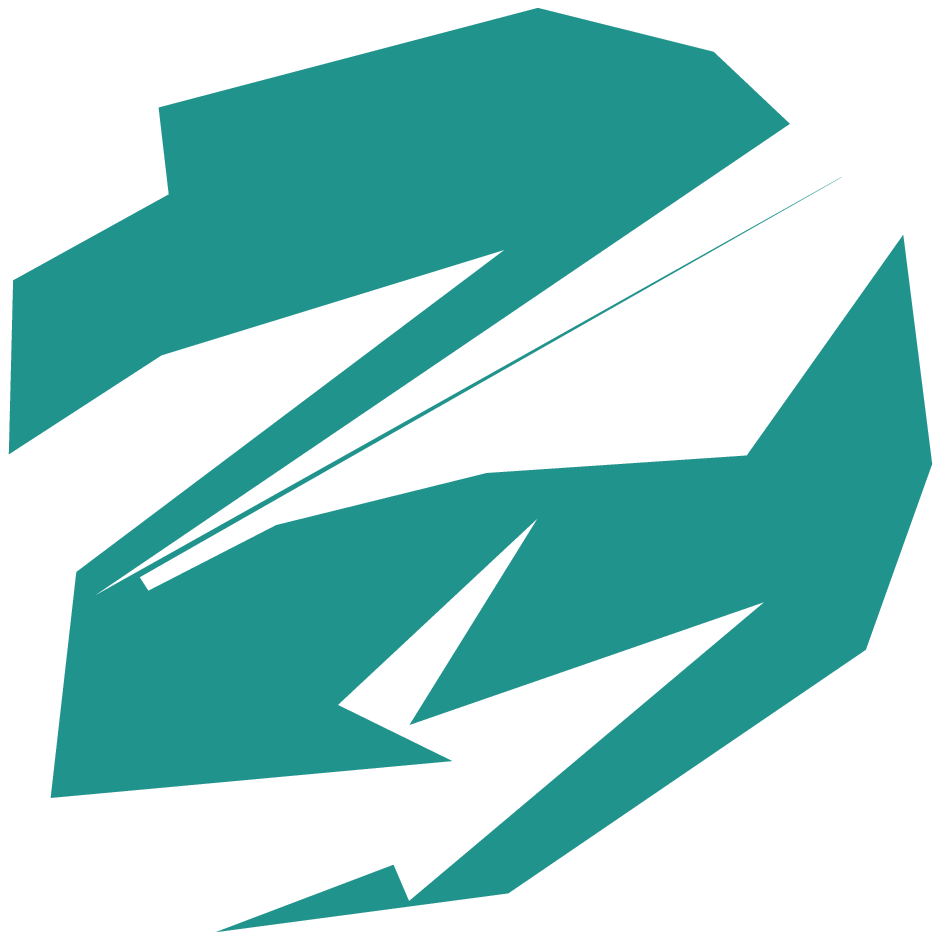}
		\includegraphics[scale=0.5]{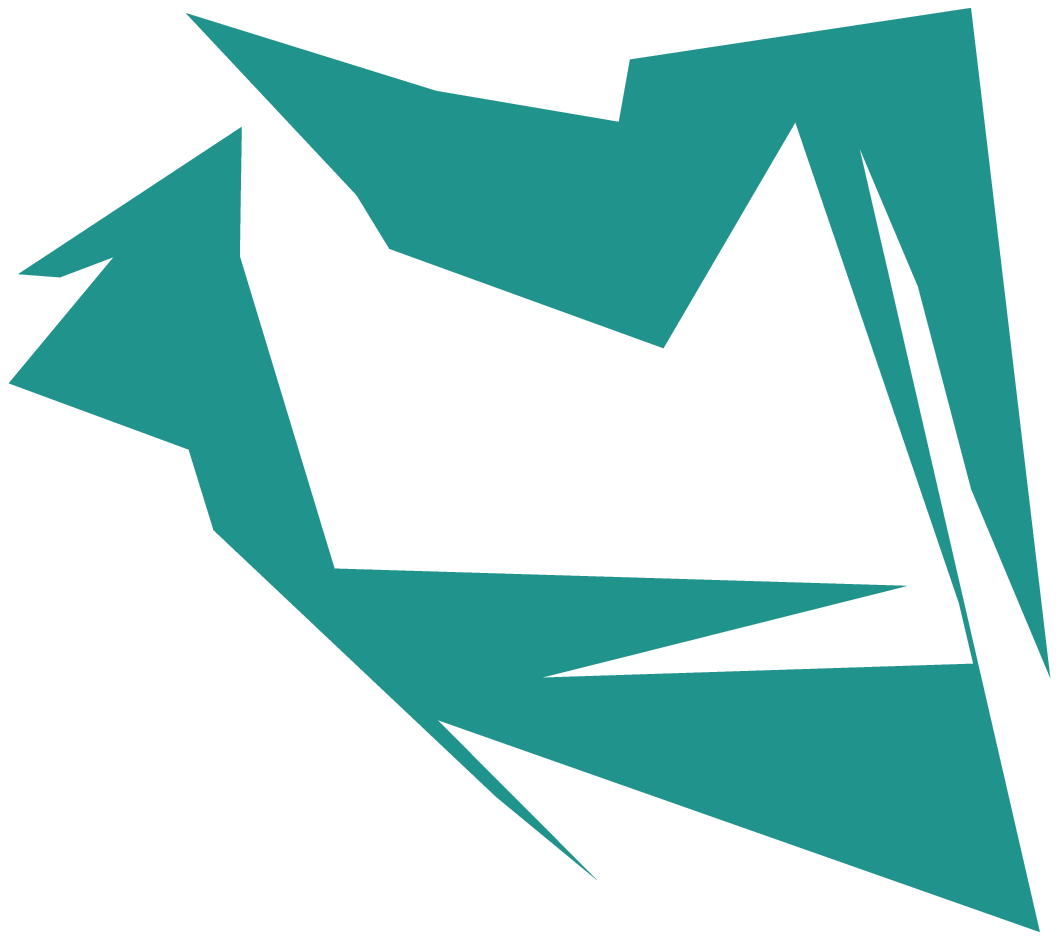}
\caption{Two of the randomly generated polygons considered in Section \ref{sec:random}}
\label{fig:random}
\end{figure}

\subsection{Non-simply connected domains}
The domains considered above were all simply connected. In this section, we consider two families of domains with holes. In both cases, we observe behaviour consistent with Conjecture \ref{con:2D}, providing evidence that the conjectured bounds on $N(\Omega)$ hold uniformly for $\Omega \subset \bbR^2$, independent of its topology.

The first is the so-called ``square annulus," consisting of a unit square with a smaller concentric square removed from its interior. The results are shown in Figure \ref{fig:annulus}, where the interior side length ranges from 0.1 to 0.9.

\begin{figure}
	\includegraphics[scale=0.5]{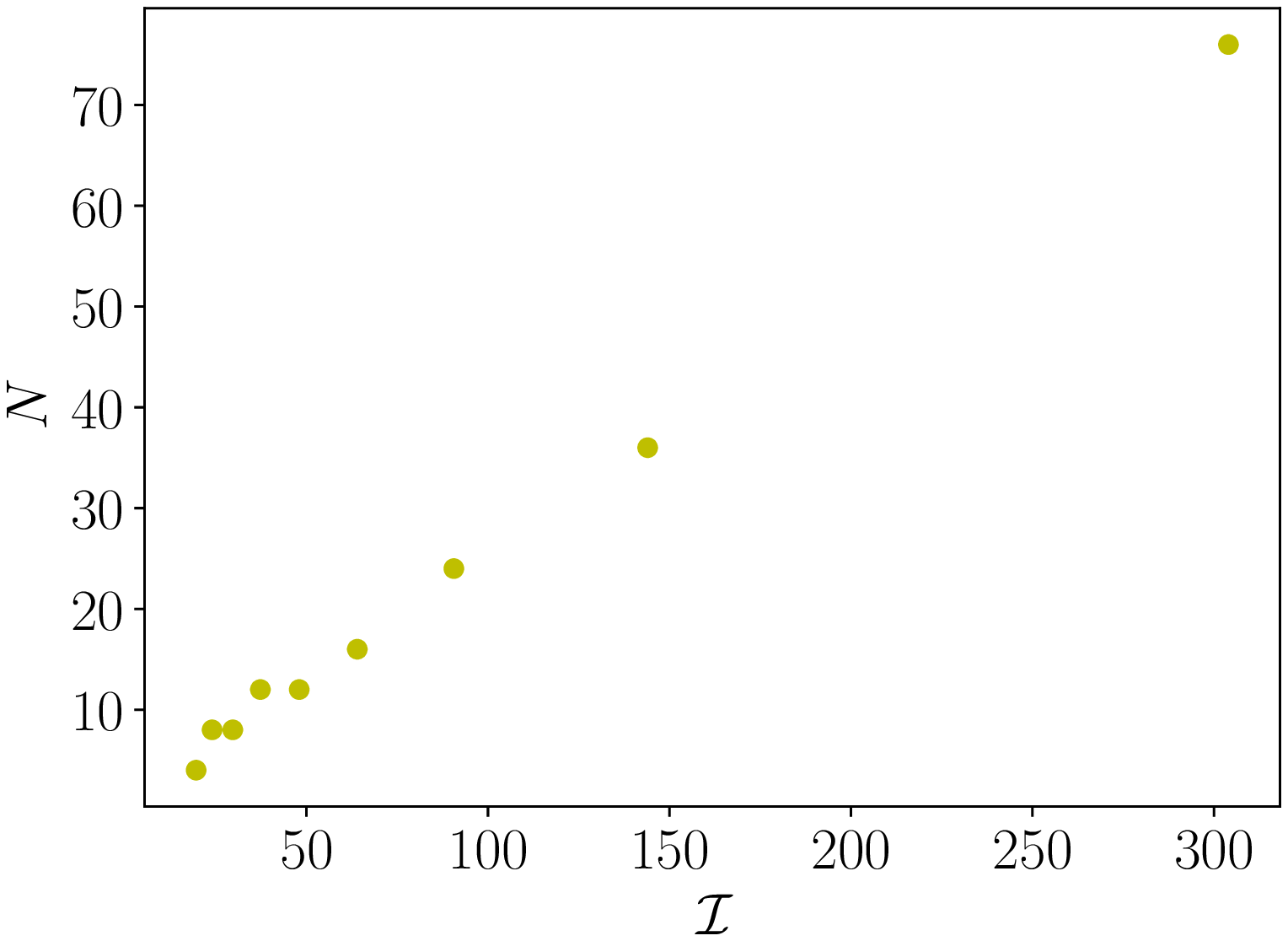}
	\includegraphics[scale=0.5]{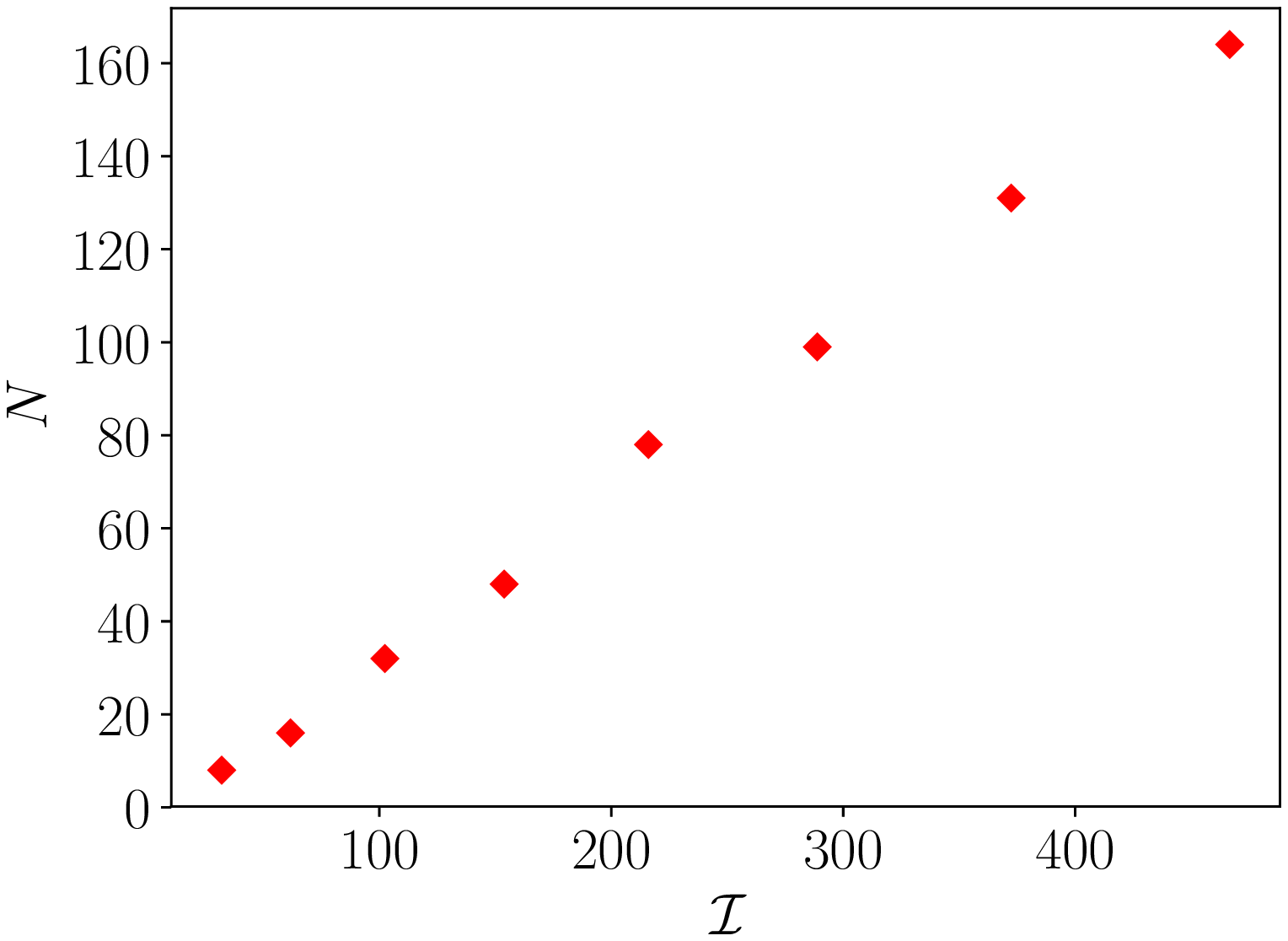}
\caption{$N$ vs. $\cI$ for square annuli (left) and waffles (right)}
\label{fig:annulus}
\end{figure}

The next example is the family of ``waffle" domains. The $m$th waffle domain, $W_m$, consists of a square of side length $2m+1$ with $m^2$ evenly spaced unit squares removed from its interior; see Figure \ref{fig:CW} for an illustration of a waffle and Figure \ref{fig:annulus} for the numerical results with $1 \leq m \leq 8$.

\section{Conclusion}
In this paper, we numerically approximated the quantity $N(\Omega)$ for many planar domains, with varying geometry and topology. In all cases, we observed that $N(\Omega)$ is controlled by the isoperimetric ratio, $\cI(\Omega)$. Based on these observations, we hypothesized that this relationship always holds (Conjecture~\ref{con:2D}). We also suggested that this holds in higher dimensions (Conjecture \ref{con:general}). We also discussed some implications these conjectures would have if they are indeed true. In particular, our conjectures, combined with results from \cite{CJM17}, yield a direction connection between the spectral position of an eigenfunction and the isoperimetric ratio of its nodal sets. 

We suggest that these conjecture be approached using the Dirichlet-to-Neumann map formalism in \cite{CJM17,F91}. This approach seems promising in light of results giving isoperimetric control on Steklov eigenvalues; see, for instance, \cite{CEG11}.

In terms of the finite-element experiments presented in this paper, a natural generalization would be the numerical study of domains in higher dimensions, where the geometry and topology can be much more complicated than in the planar case studied here.

\bibliographystyle{amsplain}
\bibliography{DNeigenvalue}

\end{document}